\documentclass[reqno,centertags,12pt]{amsart}

\usepackage{amssymb,amsmath,amsthm,dsfont}
\usepackage[T1]{fontenc}
\usepackage{mathrsfs}

\theoremstyle{plain} \newtheorem{theorem}{Theorem}[section]
\newtheorem{lemma}[theorem]{Lemma}
\newtheorem{coro}[theorem]{Corollary}
\newtheorem{prop}[theorem]{Proposition}
\theoremstyle{definition} 
\theoremstyle{remark} \newtheorem{remark}{Remark}

\newcommand{\R}{{\mathbb R}}
\newcommand{\C}{{\mathbb C}}
\newcommand{\Z}{{\mathbb Z}}
\newcommand{\N}{{\mathbb N}}
\newcommand{\T}{{\mathbb T}}
\newcommand{\iT}{\T_\alpha}
\newcommand{\sC}{{\mathcal C}}
\newcommand{\sR}{{\mathcal R}}
\newcommand{\sS}{{\mathcal S}}

\newcommand{\sB}{{\mathcal B}}
\newcommand{\sO}{{\mathcal O}}
\newcommand{\fS}{{\mathfrak S}}
\newcommand{\scC}{{\mathscr C}}
\newcommand{\scR}{{\mathscr R}}
\DeclareMathOperator{\supp}{supp}

\multlinegap=5pt

\begin{document}
\title[Strichartz estimates on irrational tori]{Strichartz estimates for Schr\"odinger equations on irrational tori in two and three dimensions}
\author[N.~Strunk]{Nils~Strunk}

\thanks{The author was supported by the German Research Foundation, Collaborative Research Center 701.}

\address{Universit\"at Bielefeld, Fakult\"at f\"ur Mathematik, Postfach 100131, 33501 Bielefeld, Germany}
\email{strunk@math.uni-bielefeld.de}

\begin{abstract}
  In this paper we prove new multilinear Strichartz estimates, which are obtained by using techniques
  of Bourgain.
  These estimates lead to new critical well-posedness results for the nonlinear Schr\"odinger equation
  on irrational tori in two and three dimensions with small initial data. In three dimensions, this includes the
  energy critical case. This extends recent work of Guo--Oh--Wang.
\end{abstract}

\subjclass[2010]{Primary 35Q55; Secondary 35B33}
\keywords{}

\maketitle

\section{Introduction}\label{sect:intro}
\noindent
The aim of this paper is to obtain new critical well-posedness results for the nonlinear Schr\"odinger equation
(NLS) posed on two-dimensional and three-dimensional irrational tori. For $d,k\in\N$ we consider the following
Cauchy problem of the NLS
\begin{equation}\label{eq:nls_irr}
   \left\{
     \begin{array}{rcll}
       i\partial_t u-\Delta u &=& \pm|u|^{2k}u  \\
       u|_{t=0}                &=& \phi \in H^s(\iT^d),
     \end{array}
   \right.
\end{equation}
where we follow the notation of \cite{GOW13} and denote the flat irrational torus by
\begin{equation}\label{eq:irr_torus}
   \iT^d = \prod_{j=1}^d \R/(\alpha_j\T),\quad \frac1C<\alpha_j<C,\;j=1,\ldots,d.
\end{equation}
By a change of spatial variables, \eqref{eq:nls_irr} is equivalent to the following nonlinear
Schr\"odinger equation on the rational torus $\T^d=\R^d/\Z^d$:
\begin{equation}\label{eq:nls_rat}
   \left\{
     \begin{array}{rcll}
       i\partial_t u-\Delta u &=& \pm|u|^{2k}u \\
       u|_{t=0}                &=& \phi \in H^s(\T^d),
     \end{array}
   \right.
\end{equation}
where the Laplace operator $\Delta$ is defined via
\[
   \widehat{\Delta f}(n) = -4\pi^2 Q(n)\widehat f(n),
\]
with $n=(n_1,\ldots,n_d)\in\Z^d$ and $Q(n)=\alpha_1n_1^2+\ldots+\alpha_dn_d^2$.
In the present paper, we study \eqref{eq:nls_rat}.

\medskip

The (scaling-)crit\-i\-cal Sobolev index is given by
\begin{equation}\label{eq:critical_index}
   s_c=\frac{d}{2}-\frac1k.
\end{equation}
For strong solutions $u\colon(-T,T)\times\T^d\to\C$ one easily verifies that the energy is conserved, i.e.\
\begin{equation}\label{eq:conserve_energy}
   E\bigl(u(t)\bigr) = \frac12 \int_{\T^d} |\nabla u(t,x)|^2\,dx
     \pm \frac{1}{2k+2} \int_{\T^d} |u(t,x)|^{2k+2}\,dx = E(\phi),
\end{equation}
and so is the $L^2$-mass, i.e.\
\begin{equation}\label{eq:conserve_l2}
   M\bigl(u(t)\bigr) = \frac12 \int_{\T^d} |u(t,x)|^2\,dx = M(\phi).
\end{equation}
Hence, the problem with $k=2$ in dimension $d=3$ is called energy-critical.

\medskip

Several authors studied critical well-posedness for NLS on flat rational tori \cite{HTT11,HTT13,W13a,W13b,GOW13}.
However, to our knowledge the first critical well-posedness results for the NLS on irrational tori have been 
recently obtained by Guo--Oh--Wang \cite[Theorem~1.7]{GOW13}. They proved critical well-posedness for small
data in the cases
\[
   \text{(i)\;\,$d=2$: }  k\geq 6, \qquad
   \text{(i\/i)\;\,$d=3$: }  k\geq 3,\qquad
   \text{(i\/i\/i)\;\,$d\geq4$: }  k\geq 2.
\]
Furthermore, they considered the energy critical case $d=3$ and $k=2$ on \emph{partially} irrational tori,
where two periods are the same \cite[Appendix~B]{GOW13}.
In the present paper we are going to extend the known results in two and three dimensions to
\begin{equation}\label{eq:d_k_cond}
   \text{(i)\;\,$d=2$: }  k\geq3,\qquad\qquad
   \text{(i\/i)\;\,$d=3$: } k=2.
\end{equation}

\medskip

We use an approach similar to  Bourgain's linear Strichartz estimate for
three-dimensional irrational tori \cite{B07}. Guo--Oh--Wang applied this idea to obtain well-posedness
in several dimensions \cite{GOW13}.
However, in contrast to \cite{GOW13}, we use mixed $L^p((-T,T),L^q(\T^d))$ spaces to
improve the trilinear Strichartz estimate \cite[Proposition~5.7]{GOW13}, leading to the corresponding result in
the energy critical case $d=3$ and $k=2$. In addition to that, we use a refined trilinear Strichartz estimate in
two dimensions (Lemma~\ref{lem:trilinear_est_2d}) to treat the two dimensional case.

\medskip

In this paper, we will focus on the multilinear Strichartz estimates in Proposition~\ref{prop:trilinear_est_2d} and
Proposition~\ref{prop:trilinear_est}.
These propositions serve as a replacement for Proposition~5.7 in \cite{GOW13}, which implies
critical well-posedness results by standard arguments, cf.\ \cite[Section~5]{GOW13}, \cite[Section~4]{HTT11}, and
the references therein:
Define appropriate iteration spaces that go
back to Herr--Tataru--Tzvetkov \cite[Definition~2.6--2.7]{HTT11}, in which one may control the Duhamel term,
cf.\ \cite[Proposition~4.7]{HTT11} and \cite[Proposition~5.6]{GOW13}. Finally, a fixed-point argument proves
local well-posedness \cite[Proof of Theorem~1.1]{HTT11}. Global well-posedness for small data follows
essentially from the conservation laws \eqref{eq:conserve_energy}, \eqref{eq:conserve_l2},
see e.g.\ \cite[Proof of Theorem~1.2]{HTT11}. Hence our results lead to:

\begin{theorem}\label{thm:wp}
  Let $s_c$ be defined by \eqref{eq:critical_index} and let $d,k\in\N$ satisfy \eqref{eq:d_k_cond}.
  Then, for all $s\geq s_c$ the initial value problem \eqref{eq:nls_irr} is locally well-posed in $H^s(\iT^d)$, and
  globally well-posed in $H^s(\iT^d)$, if the initial value is small in $H^{s_c}(\iT^d)$.
\end{theorem}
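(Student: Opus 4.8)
The plan is to deduce Theorem~\ref{thm:wp} from the multilinear Strichartz estimates of Proposition~\ref{prop:trilinear_est_2d} and Proposition~\ref{prop:trilinear_est} by the now-standard contraction/continuity argument in the function spaces of Herr--Tataru--Tzvetkov, exactly along the lines of \cite[Section~4]{HTT11} and \cite[Section~5]{GOW13}; I only indicate how the pieces fit together. First I would set up the analytic framework on the rational torus $\T^d$ with the modified Laplacian: the atomic spaces $U^2_\Delta$ and their duals $V^2_\Delta$ adapted to the propagator $e^{it\Delta}$, the Besov-type solution and nonlinearity spaces $X^s$ and $Y^s$ built from them with Littlewood--Paley projections $P_N$ (as in \cite[Definitions~2.6--2.7]{HTT11}), and the basic facts to be used repeatedly: the linear estimate $\|e^{it\Delta}\phi\|_{X^s(I)}\lesssim\|\phi\|_{H^s(\T^d)}$, the Duhamel (energy) estimate bounding $\|\int_0^t e^{i(t-\tau)\Delta}F(\tau)\,d\tau\|_{X^s(I)}$ by a $Y^s(I)$-type norm of $F$, the duality pairing between $X^s$ and $Y^{-s}$, and the transference principle that promotes space-time estimates for free solutions to the $U^2_\Delta$/$V^2_\Delta$ setting, including in the mixed $L^p_t L^q_x$ norms exploited in Proposition~\ref{prop:trilinear_est}.

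The second step reduces local well-posedness to a single multilinear estimate on the Duhamel term. Writing the nonlinearity schematically as $\mathcal{N}=\pm\prod_{j=1}^{2k+1}\tilde u_j$ with each $\tilde u_j\in\{u_j,\bar u_j\}$, it suffices to establish
\[
   \Bigl\|\int_0^t e^{i(t-\tau)\Delta}\mathcal{N}(\tau)\,d\tau\Bigr\|_{X^{s}(I)}
     \lesssim \prod_{j=1}^{2k+1}\|u_j\|_{X^{s}(I)}
\]
for $s=s_c$ (and, with routine modifications, for $s>s_c$), together with the analogous difference estimate; a fixed-point argument in a small ball of $X^{s_c}(I)$ then produces the unique local solution, and persistence of regularity yields the statement for all $s\geq s_c$. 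Via the energy estimate and duality, the display above is implied by a $(2k+2)$-linear space-time bound on frequency-localized free solutions, which is exactly the setting of the Propositions.

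For the third step I would Littlewood--Paley decompose each factor, use almost-orthogonality to reduce to fixed dyadic frequencies $N_1\geq N_2\geq\cdots\geq N_{2k+1}$ with output frequency $\lesssim N_1$, apply the trilinear estimate (Proposition~\ref{prop:trilinear_est}, resp.\ Proposition~\ref{prop:trilinear_est_2d} in two dimensions, the latter resting on Lemma~\ref{lem:trilinear_est_2d}) to the three highest-frequency factors $P_{N_1}u_1,P_{N_2}u_2,P_{N_3}u_3$, and control the remaining $2k-2$ low-frequency factors $P_{N_j}u_j$, $j\geq 4$, by placing them in suitable $L^p_tL^q_x$ Strichartz norms and using Bernstein's inequality to convert their frequency support into powers of $N_j$; interpolation then distributes the derivatives so that exactly $s_c$ is spent on each factor. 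The trilinear estimate contributes a favorable power of the smaller frequencies --- the derivative-recovery gain, sharpened here by the mixed-norm refinement --- and this gain together with the gap between $N_1$ and $N_4,\dots,N_{2k+1}$ renders the sum over all dyadic parameters absolutely convergent at $s=s_c$. I expect this summation, and the exact balancing of the exponents in the auxiliary Strichartz norms against the critical budget, to be the only genuinely delicate bookkeeping; conceptually nothing new is needed, since the whole purpose of Propositions~\ref{prop:trilinear_est_2d} and \ref{prop:trilinear_est} is to make precisely this sum close (the irrationality of the torus enters only through the lattice-point counting carried out inside those propositions).

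Finally, global well-posedness. Since the multilinear estimate above has implicit constant independent of $|I|$, the same contraction runs on $I=\R$ once $\|\phi\|_{H^{s_c}(\iT^d)}$ is below an absolute threshold, yielding a global (and scattering) solution in $X^{s_c}(\R)$; for $s>s_c$ one upgrades by persistence of regularity, in the energy-critical case $d=3$, $k=2$ (where $s_c=1$) invoking the conserved energy and mass \eqref{eq:conserve_energy}--\eqref{eq:conserve_l2} to bound the $H^1$-norm a priori and thereby propagate higher Sobolev regularity for all time, as in \cite[Proof of Theorem~1.2]{HTT11}. No obstacle arises here beyond what is already handled in \cite{HTT11,GOW13}; the substantive work in the proof of Theorem~\ref{thm:wp} is entirely contained in the Propositions.
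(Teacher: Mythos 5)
Your proposal follows essentially the same route as the paper, which itself does not spell out a proof but reduces Theorem~\ref{thm:wp} to Propositions~\ref{prop:trilinear_est_2d} and \ref{prop:trilinear_est} via the standard Herr--Tataru--Tzvetkov $U^2/V^2$ machinery, the Duhamel/duality reduction, a fixed-point argument, and iteration of the local result using the conservation laws, exactly as you outline. One caveat: your claim that the contraction ``runs on $I=\R$'' and yields scattering is not accurate on a compact torus --- the Strichartz estimates here are local in time (the constants depend on $\tau_0\subseteq[0,1]$), so globalization must proceed, as you also note and as the paper indicates, by iterating unit-time local well-posedness with the conserved mass and energy keeping the relevant norm small.
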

A more precise formulation of the theorem may be found for instance in \cite[Theorem~1.1 and Theorem~1.2]{HTT11}.

\medskip

The paper is organized as follows: In Section~\ref{sect:notation} we introduce some basic notation that will be used
later on.
The two and three dimensional cases are considered in Section~\ref{sect:2d} and Section~\ref{sect:3d}, respectively.
The paper is not self-contained. We rely on results from \cite{B89,B07,GOW13,H12}.

\section{Notation}\label{sect:notation}
\noindent
The following notations are quite standard, see e.g.\ \cite{HTT11}.
We will write $A\lesssim B$, if there exists a harmless constant $c>0$ such that $A\leq c B$.
Define the spatial Fourier coefficients 
\[
  \widehat f(n) := \int_{[0,1]^d} e^{-2\pi ix\cdot n} f(x)\,dx,\quad n\in\Z^d.
\]

We fix a non-negative, even function $\psi\in C^\infty_0((-2,2))$ with $\psi(s)=1$ for $|s|\leq1$ to define
a partition of unity: for a dyadic number $N\geq 1$, we set
\begin{equation}\label{eq:def_cutoff}
   \psi_N(\xi) := \psi\biggl(\frac{|\xi|}{N}\biggr) - \psi\biggl(\frac{2|\xi|}{N}\biggr)
   \quad \text{for }N\geq 2,\quad \psi_1(\xi) := \psi(|\xi|).
\end{equation}
We also define the frequency localization operators $P_N\colon L^2(\T^d)\to L^2(\T^d)$ as the Fourier multiplier
with symbol $\psi_N$. Moreover, we define $P_{\leq N}:=\sum_{1\leq M\leq N} P_M$.

More generally, given a set $\sS\subseteq\Z^d$, we define $P_\sS$ to be the Fourier multiplier operator with
symbol $\chi_\sS$, where $\chi_\sS$ denotes the characteristic function of $\sS$.

For $N,M\geq 1$ we define the collection of rectangular sets
\begin{multline*}
   \scR_{N,M}:=\bigl\{\sC\subseteq\Z^d : \exists \text{$z\in\Z^d$, $O$ orthogonal $d\times d$-Matrix s.t.}\\
     O\sC+z\subseteq [-N,N]^{d-1}\times[-M,M] \bigr\}.
\end{multline*}
Furthermore, we set $\scC_N:=\scR_{N,N}$.

For $s\in\R$, we define the Sobolev space $H^s(\T^d)=(1-\Delta)^{-\frac{s}{2}}L^2(\T^d)$ endowed with the norm
\[
   \|f\|^2_{H^s(\T^d)} = \sum_{n\in\Z^d} \langle n\rangle^{2s} |\widehat f(n)|^2_{L^2(\T^d)},
   \quad\text{where }\langle x\rangle = (1+|x|^2)^{\frac12}.
\]
We denote the linear Schr\"odinger evolution by
\[
   (e^{it\Delta} f)(x) = \sum_{n\in\Z^d} \widehat f(n) e^{2\pi i(n\cdot x + Q(n)t)}.
\]

\section{The two dimensional case}\label{sect:2d}
\noindent
The following lemma may be seen as a variant of the Hausdorff--Young inequality and was used by several authors,
e.g.\ \cite{B07,GOW13}. This lemma is one of our most important ingredients, hence, we are going to prove it.
\begin{lemma}\label{lem:point_est}
  Let $p\geq 2$ and $r\geq1$. There exists a compact interval $I\subseteq\R$ such that for all bounded
  $\sS\subseteq \Z^d$, $f\colon\sS\to\R$, and $\fS_k$ defined by
\[
   \fS_k:=\bigl\{ n\in\sS : |f(n)-k|\leq r \bigr\}, \quad k\in\Z,
\]
  the following estimate holds true
\[
   \|\#\fS_k\|_{\ell_k^p} \lesssim \biggl\| \sum_{n\in\sS} e^{2\pi i f(n)t} \biggr\|_{L_t^{\frac{p}{p-1}}(I)}.
\]
\end{lemma}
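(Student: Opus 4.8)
The plan is to majorize each counting number $\#\fS_k$ by $\sum_{n\in\sS}\rho(f(n)-k)$ for a cleverly chosen nonnegative, band-limited bump $\rho$, expand $\rho$ by Fourier inversion so that the exponential sum $g(t):=\sum_{n\in\sS}e^{2\pi if(n)t}$ appears, and then apply the Hausdorff--Young inequality (available since $p\ge2$) to turn the $\ell_k^p$-norm of a sequence of Fourier coefficients into an $L^{p/(p-1)}$-norm of $g$. Note first that a bounded subset of $\Z^d$ is finite, so $\sS$ is finite throughout: $g$ is a finite exponential sum, only finitely many $\fS_k$ are nonempty, and all interchanges of sum and integral below are trivially justified.

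The heart of the matter is the construction of $\rho$. I want $\rho\colon\R\to[0,\infty)$ with (a) $\rho(x)\ge1$ whenever $|x|\le r$ and (b) $\supp\widehat\rho\subseteq I:=[-\tfrac1{2r},\tfrac1{2r}]$. The trick that secures all three properties at once is to take a Fej\'er-type kernel, namely the square of a dilated sinc: with $\delta:=\tfrac1{2r}$ set
\[
  \rho(x):=\frac{\pi^2}{4}\left(\frac{\sin(\pi\delta x)}{\pi\delta x}\right)^2 .
\]
Being a square, $\rho\ge0$ everywhere; since $v\mapsto(\sin v/v)^2$ is nonincreasing on $[0,\pi]$ and $|\pi\delta x|\le\tfrac{\pi}{2}$ for $|x|\le r$, we get $\rho(x)\ge\tfrac{\pi^2}{4}(\sin(\tfrac{\pi}{2})/\tfrac{\pi}{2})^2=1$ there; and $\widehat\rho$ is, up to a constant and a dilation, the triangle function — explicitly $\widehat\rho(\xi)=\tfrac{\pi^2}{4\delta}\max\bigl(1-\tfrac{|\xi|}{\delta},0\bigr)$ — so that $\supp\widehat\rho\subseteq[-\delta,\delta]=I$ and $\|\widehat\rho\|_{L^\infty}=\tfrac{\pi^2}{4\delta}=\tfrac{\pi^2 r}{2}$.

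With $\rho$ in hand the rest is bookkeeping. For each $k\in\Z$ and $n\in\sS$ one has $\rho(f(n)-k)\ge\mathbf 1_{[-r,r]}(f(n)-k)$, so Fourier inversion for $\rho$ (legitimate since $\rho,\widehat\rho\in L^1$ and $\rho$ is continuous) together with the interchange of the finite sum and the integral gives
\[
  0\le\#\fS_k\le\sum_{n\in\sS}\rho(f(n)-k)=\sum_{n\in\sS}\int_I\widehat\rho(\xi)\,e^{2\pi i(f(n)-k)\xi}\,d\xi=\int_I\widehat\rho(\xi)\,g(\xi)\,e^{-2\pi ik\xi}\,d\xi .
\]
The last integral is precisely the $k$-th Fourier coefficient, taken with respect to the length-one interval $[-\tfrac12,\tfrac12]\supseteq I$ (here $r\ge1$ is used, so that $|I|=\tfrac1r\le1$), of the function $H:=\widehat\rho\,g\,\mathbf 1_I$. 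Hence $\#\fS_k\le|\widehat H(k)|$, and the Hausdorff--Young inequality on $[-\tfrac12,\tfrac12]$ for the exponent pair $(p',p)$, valid because $p\ge2$, yields
\[
  \|\#\fS_k\|_{\ell_k^p}\le\|\widehat H(k)\|_{\ell_k^p}\le\|H\|_{L^{p'}([-1/2,1/2])}=\|\widehat\rho\,g\|_{L^{p'}(I)}\le\|\widehat\rho\|_{L^\infty}\,\|g\|_{L^{p'}(I)}=\tfrac{\pi^2 r}{2}\,\|g\|_{L^{p'}(I)},
\]
which is the assertion with the compact interval $I=[-\tfrac1{2r},\tfrac1{2r}]$; any larger compact interval works just as well, since enlarging $I$ only enlarges the right-hand side. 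The single genuinely delicate point is the choice of $\rho$: it has to be simultaneously nonnegative, at least $1$ on the whole interval $[-r,r]$, and frequency-localized to an interval short enough to fit inside one period — the dilated-sinc-squared does all three, and everything downstream is routine plus the classical Hausdorff--Young estimate.
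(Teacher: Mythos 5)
Your proof is correct and follows essentially the same route as the paper: both arguments dominate $\#\fS_k$ by a nonnegative Fej\'er-type majorant of $\chi_{[-r,r]}$ whose Fourier transform is supported in a fixed compact interval, recognize the resulting sum as a Fourier coefficient of (majorant)$\,\times\, g$ with $g(t)=\sum_{n\in\sS}e^{2\pi i f(n)t}$, and conclude by Hausdorff--Young. If anything, your normalization is the more careful one: choosing $\supp\widehat\rho\subseteq[-\tfrac1{2r},\tfrac1{2r}]$ makes the numbers $\widehat H(k)$ genuine Fourier coefficients over a unit period (so the series form of Hausdorff--Young applies verbatim), whereas the paper's displayed example $\eta=c\,\chi_{[-r,r]}\ast\chi_{[-r,r]}$ has transform $c\bigl(\tfrac{\sin(2\pi r\tau)}{\tau}\bigr)^2$, which vanishes at $\tau=\tfrac1{2r}\in[-r,r]$ when $r\geq1$ and hence is not $\geq1$ on all of $[-r,r]$ for any $c$ --- a slip your choice of scale avoids.
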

\begin{proof}
  The proof may be found in \cite[(1.1.8')--(1.1.9)]{B07}. However, we are going to spell out some details.
  Let $\eta\colon\R\to\R$ be a continuous, compactly supported function with $\widehat\eta(\tau)\geq0$
  for all $\tau\in\R$ and $\widehat\eta(\tau)\geq 1$ for all $\tau\in[-r,r]$, e.g.\ for $c>0$ large enough
\[
   \eta(t)=c\,\chi_{[-r,r]}\ast\chi_{[-r,r]}(t),
\]
  with Fourier transform $\widehat\eta(\tau)=c\bigl(\frac{\sin (2\pi r \tau) }{\tau}\bigr)^2$.
  Define $\psi\colon \R\to\R$ by
\[
   \psi(t) = \sum_{n\in\sS} e^{2\pi if(n)t} \eta(t),
\]
  and set $I:=\supp\psi$. Then, the Hausdorff--Young inequality implies
\[
   \|\#\fS_k\|_{\ell_k^p} \leq \biggl\| \sum_{n\in\sS} \widehat\eta\bigl(k-f(n)\bigr)\biggr\|_{\ell_k^p}
   = \bigl\| \widehat\psi(k) \bigr\|_{\ell_k^p}
   \lesssim \biggl\|\sum_{n\in\sS} e^{2\pi if(n)t} \biggr\|_{L_t^{\frac{p}{p-1}}(I)}.
\]
\end{proof}

In several applications it turned out to be beneficial to use almost orthogonality in time.
This was first observed by Herr--Tataru--Tzvetkov \cite[Proof of Proposition~3.5]{HTT11} for the rational torus
$\T^3$, and later also applied for Zoll manifolds such as $\mathbb S^3$ \cite[Proof of Proposition~3.6]{H12}.
The next lemma shows what this means in our setting.
\begin{lemma}\label{lem:orth}
  Let $\sigma>0$ and $\tau_0\subset\R$ be a bounded interval. Furthermore, let $\tau_1\supset\overline{\tau_0}$
  be an open interval. Then, for all $\phi_1,\ldots,\phi_{2k+1}\in L^2(\T^d)$ and dyadic numbers
  $N_1\geq\ldots\geq N_{2k+1}\geq 1$ there exist rectangles $\sR_\ell\in\scR_{N_2,M}$, where
  $M=\max\bigl\{\frac{N_2^2}{N_1},1\bigr\}$, such that $P_{N_1}=\sum_{\ell}P_{\sR_\ell}P_{N_1}$ and
\begin{multline*}
  \biggl\|\prod_{j=1}^{2k+1} P_{N_j}e^{it\Delta} \phi_j \biggr\|_{L^2(\tau_0\times\T^d)}^2
  \lesssim \sum_{\ell} \biggl\|P_{\sR_\ell}P_{N_1} e^{it\Delta}\phi_1
     \prod_{j=2}^{2k+1}P_{N_j}e^{it\Delta}\phi_j\biggr\|_{L^2(\tau_1\times\T^d)}^2\\
    + N_2^{-\sigma}\prod_{j=1}^{2k+1} \|P_{N_j}\phi_j\|_{L^2(\T^d)}^2.
\end{multline*}
\end{lemma}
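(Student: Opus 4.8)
The plan is to prove Lemma~\ref{lem:orth} by an almost-orthogonality argument in the time variable, following the strategy of Herr--Tataru--Tzvetkov. First I would fix a function $\chi\in C_0^\infty(\tau_1)$ with $\chi\equiv 1$ on $\overline{\tau_0}$, so that the left-hand side is bounded by the same expression with $\tau_0\times\T^d$ replaced by $\R\times\T^d$ and an extra factor $\chi(t)$ inserted. The point of passing to the whole line is that one may now take the space-time Fourier transform. Write $u_j=P_{N_j}e^{it\Delta}\phi_j$; since $\widehat{u_1}$ is supported on frequencies $n$ with $|n|\sim N_1$, I would decompose the frequency annulus $\{|n|\sim N_1\}$ into a collection of boxes, each congruent to a box of dimensions $N_2\times\cdots\times N_2\times M$ with $M=\max\{N_2^2/N_1,1\}$, oriented so that the short direction is radial. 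This gives $P_{N_1}=\sum_\ell P_{\sR_\ell}P_{N_1}$ with $\sR_\ell\in\scR_{N_2,M}$, and correspondingly $\chi u_1=\sum_\ell \chi P_{\sR_\ell}u_1$.

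The key geometric observation is the following: the space-time Fourier support of $P_{\sR_\ell}u_1\cdot\prod_{j\ge 2}u_j$ is contained in a slab in the dual time variable $\tau$ whose thickness is $O(N_2^2)$ (up to the harmless fattening coming from $\chi$, whose Fourier transform is Schwartz, which accounts for the $N_2^{-\sigma}$ error term). Indeed, the temporal frequency of $e^{it\Delta}\phi_j$ at spatial frequency $n_j$ is $Q(n_j)$; for the product it is $\sum_j \pm Q(n_j)$, and when $n_1$ is restricted to the box $\sR_\ell$ centered at some $\xi_\ell$, a Taylor expansion of $Q$ (using $|n_1-\xi_\ell|$ in the short radial direction $\lesssim M$ and $\lesssim N_2$ in the other directions, together with $|\xi_\ell|\sim N_1$ and the bound $Q(\xi_\ell)\lesssim N_1^2$) confines $Q(n_1)$ to an interval of length $\lesssim N_1 M + N_2^2\lesssim N_2^2$. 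The contributions $Q(n_j)$ for $j\ge 2$ are trivially of size $\lesssim N_j^2\le N_2^2$. Hence, after discarding a set of measure $N_2^{-\sigma}(\ldots)$ coming from the tails of $\widehat\chi$, each summand $\chi P_{\sR_\ell}u_1\prod_{j\ge2}u_j$ has $\tau$-support in an interval $J_\ell$ of length $\sim N_2^2$, and these intervals can be grouped so that each point lies in $O(1)$ of them (the centers $Q(\xi_\ell)$ range over $[0,O(N_1^2)]$, but what matters is that any fixed $\tau$ is compatible with boundedly many boxes $\sR_\ell$ once the frequency of $n_1$ is essentially pinned by $\tau$ modulo the slab thickness).

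Granting the bounded-overlap property, Plancherel in space-time and the triangle inequality give
\[
  \Bigl\|\chi\sum_\ell P_{\sR_\ell}u_1\prod_{j=2}^{2k+1}u_j\Bigr\|_{L^2(\R\times\T^d)}^2
  \lesssim \sum_\ell \Bigl\|\chi P_{\sR_\ell}u_1\prod_{j=2}^{2k+1}u_j\Bigr\|_{L^2(\R\times\T^d)}^2,
\]
and bounding $\chi$ by the indicator of $\tau_1$ and using $\|\chi\cdot g\|\le\|g\|_{L^2(\tau_1\times\T^d)}$ for the first factor (while keeping $\chi$ a fixed Schwartz bump for the error estimate) yields the right-hand side of the lemma, with the $N_2^{-\sigma}\prod_j\|P_{N_j}\phi_j\|_{L^2}^2$ term absorbing the contribution of the Fourier tails of $\chi$ outside the slab. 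For that error term one uses that $\widehat\chi$ decays faster than any polynomial, so the portion of $\widehat{\chi P_{\sR_\ell}u_1\cdots}$ lying outside a $CN_2$-neighborhood of $J_\ell$ has $L^2$-norm $\lesssim N_2^{-\sigma/2}\|P_{\sR_\ell}u_1\prod u_j\|$, and summing the squares in $\ell$ and applying $L^2$-orthogonality of the $P_{\sR_\ell}$ in the frequency of $n_1$ together with $\|u_j\|_{L^\infty_t L^2_x}=\|P_{N_j}\phi_j\|_{L^2}$ for $j\ge 2$ (crudely, via Bernstein if needed) gives the claimed bound.

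The main obstacle I anticipate is making the bounded-overlap count rigorous while simultaneously controlling the error term: one must choose the fattening scale in $\tau$ (a constant multiple of $N_2^2$) large enough that the slabs genuinely have $O(1)$ overlap after accounting for the Schwartz tails of $\widehat\chi$, yet the tail outside that scale must still be summable to something of size $N_2^{-\sigma}$ — this forces a careful splitting $\widehat\chi=\widehat\chi\cdot\mathbf 1_{\text{near }J_\ell}+\widehat\chi\cdot\mathbf 1_{\text{far}}$ and an honest accounting of the constants relating $M$, $N_1$, $N_2$ through the Taylor expansion of the quadratic form $Q$, in particular verifying that the cross term $N_1\cdot M$ is indeed $\lesssim N_2^2$ in both regimes $N_2^2\ge N_1$ and $N_2^2<N_1$ defining $M$.
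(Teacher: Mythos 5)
Your overall strategy is the same as the paper's: insert a smooth cutoff supported in $\tau_1$, decompose the frequency annulus $\{|n|\sim N_1\}$ into boxes in $\scR_{N_2,M}$ with short side $M=\max\{N_2^2/N_1,1\}$, and exploit almost-orthogonality of the space-time Fourier supports of the resulting products, with the $N_2^{-\sigma}$ term absorbing the Schwartz tails of the cutoff. However, the step you lean on hardest has a genuine gap: the claim that the temporal supports $J_\ell$ have $O(1)$ overlap is false as stated. A fixed $\tau$ pins $Q(n_1)$ only to an interval of length $O(N_2^2)$, i.e.\ to a shell of thickness $\approx M$ inside the annulus, and that shell meets about $(N_1/N_2)^{d-1}$ of your boxes, all of which carry essentially the same interval $J_\ell$. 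Boundedly many boxes are selected only after one \emph{also} fixes the spatial output frequency, which pins $n_1$ modulo $O(N_2)$. The paper avoids this by first invoking spatial orthogonality to reduce to a single cube $\sC\in\scC_{N_2}$ and only then slicing $\sC$ into $\approx N_1/M$ parallel strips $\sR_\ell$; within one cube the index $\ell$ is one-dimensional and the separation $\bigl|\sum_jQ(n_j')-Q(n_j)\bigr|\gtrsim N_2^2\langle\ell-\ell'\rangle$ gives exactly the overlap count you want. You need to add that reduction, or equivalently run your overlap argument on the full space-time Fourier supports rather than on the $\tau$-projections alone.

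Two further remarks. First, for the irrational form $Q(n)=\sum_j\alpha_jn_j^2$ the dangerous first-order term in your Taylor expansion is not the cross term $N_1\cdot M$ you flag but $\nabla Q(\xi_0)\cdot h$ with $h$ in the \emph{long} directions of the box, which is of size $N_1N_2\gg N_2^2$ unless the short side is aligned with $\nabla Q(\xi_0)=2(\alpha_1(\xi_0)_1,\ldots,\alpha_d(\xi_0)_d)$ rather than with the radial direction $\xi_0$; since the $\alpha_j$ need not be equal these directions differ by a fixed angle, so ``short direction radial'' is not the correct orientation, and your ``honest accounting'' must address this. Second, your near/far splitting of $\widehat\chi$ can be dispensed with: the paper expands the square into diagonal and off-diagonal terms, writes each off-diagonal pairing through $\widehat{\eta^2}\bigl(\sum_jQ(n_j')-Q(n_j)\bigr)$, uses the separation to bound this by $N_2^{-2\beta}\langle\ell-\ell'\rangle^{-\beta}$ for any $\beta$, and concludes by Cauchy--Schwarz and Schur's lemma in $(\ell,\ell')$. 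This does in one stroke the bookkeeping of tails versus overlap that you identify as the main obstacle.
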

\begin{proof}
  Thanks to spatial orthogonality, we may replace $P_{N_1}e^{it\Delta}\phi_1$ by $P_{\sC}e^{it\Delta}\phi_1$, where
  $\sC\in\scC_{N_2}$. Without loss of generality, we may assume $N_1\gg N_2$.
  As in the proof of \cite[Proposition~3.5]{HTT11}, we define the following partition:
  Let $\xi_0\in\Z^d$ be the center of $\sC$. We define almost disjoint strips of width
  $M=\max\bigl\{\frac{N_2^2}{N_1},1\bigr\}$, which are orthogonal to $\xi_0$:
\[
   \sR_\ell:=\bigl\{ \xi\in\sC : \xi\cdot\xi_0\in\bigl[|\xi_0|M\ell, |\xi_0|M(\ell+1)\bigr) \bigr\}\in\scR_{N_2,M},
   \quad \N\ni\ell\approx\frac{N_1}{M}.
\]
  Since $\sC=\bigcup_\ell\sR_\ell$, we have $P_{\sC}P_{N_1}e^{it\Delta}\phi_1=\sum_\ell P_{\sR_\ell}P_{N_1}e^{it\Delta}\phi_1$.

  Let $\eta\in C^\infty_0(\tau_1)$ be a cutoff function satisfying $\eta(t)=1$ for all $t\in\tau_0$. Obviously,
\[
   \biggl\|\prod_{j=1}^{2k+1}P_{N_j}e^{it\Delta}\phi_j\biggr\|_{L^2(\tau_0\times\T^d)}^2
   \leq \biggl\|\eta(t)\prod_{j=1}^{2k+1}P_{N_j}e^{it\Delta}\phi_j\biggr\|_{L^2(\R\times\T^d)}^2\lesssim I_1+I_2,
\]
  where
\[
   I_1:=\sum_{\ell\approx N_1/M} \biggl\|P_{\sR_\ell}P_{N_1} e^{it\Delta}\phi_1
     \prod_{j=2}^{2k+1}P_{N_j}e^{it\Delta}\phi_j\biggr\|_{L^2(\tau_1\times\T^d)}^2,
\]
  and
\begin{multline*}
   I_2:=\sum_{\substack{\ell,\ell'\approx N_1/M:\\|\ell-\ell'|\gg1}}
       \biggl\langle \eta(t)^2 P_{\sR_\ell}P_{N_1} e^{it\Delta}\phi_1 \prod_{j=2}^{2k+1}P_{N_j}e^{it\Delta}\phi_j,\\
        P_{\sR_{\ell'}}P_{N_1} e^{it\Delta}\phi_1 \prod_{j=2}^{2k+1}P_{N_j}e^{it\Delta}\phi_j \biggr\rangle_{L^2(\R\times\T^d)}.
\end{multline*}
  We have to show that $I_2\lesssim N_2^{-\sigma}\prod_{j=1}^{2k+1} \|P_{N_j}\phi_j\|_{L^2(\T^d)}^2 $.
  Interpreting the integration with respect to $t$ as Fourier transform on $\R$ and taking the absolute value,
  we get
\begin{multline*}
   |I_2| \lesssim  \sum_{\substack{\ell,\ell'\approx N_1/M:\\|\ell-\ell'|\gg1}}
     \sum_{\substack{n_1\in\sR_\ell,\;n_1'\in\sR_{\ell'},\\n_j,n_j'\in \sB_{N_j},\;j=2\ldots 2k+1}}
     \bigl|\widehat{\eta^2}\bigr|\biggl(\sum_{j=1}^{2k+1}Q(n_j')-Q(n_j)\biggr)\\
     \times\prod_{j=1}^{2k+1}\bigl|\widehat{\phi_j}(n_j)\bigr|\bigl|\overline{\widehat{\phi_j}(n_j')}\bigr|,
\end{multline*}
  where $\sB_N:=\supp \psi_N$ with $\psi_N$ defined in \eqref{eq:def_cutoff}.
  Similar to the proof of \cite[Proposition~3.5]{HTT11} we get
\[
   \biggl|\sum_{j=1}^{2k+1}Q(n_j')-Q(n_j)\biggr| = M^2|\ell-\ell'|(\ell+\ell')+\sO(M^2\ell)+\sO(N_2^2)
   \gtrsim N_2^2\langle \ell-\ell' \rangle,
\]
  since $\ell,\ell'\approx\frac{N_1}{M}$ and $|\ell-\ell'|\gg1$.
  Thus, for any $\beta>0$, we may estimate
\[
   \bigl|\widehat{\eta^2}\bigr|\biggl(\sum_{j=1}^{2k+1}Q(n_j')-Q(n_j)\biggr)
   \lesssim_\beta N_2^{-2\beta}\langle \ell-\ell' \rangle^{-\beta}.
\]
  Using Cauchy-Schwarz with respect to $n_j$, $n_j'$, $j=1,\ldots,2k+1$, yields
\[
   |I_2| \lesssim N_2^{-\sigma}
     \sum_{\substack{\ell,\ell'\approx N_1/M:\\|\ell-\ell'|\gg1}}\langle \ell-\ell' \rangle^{-\beta}
     \|P_{\sR_\ell}\phi_j\|_{L^2}\|P_{\sR_{\ell'}}\phi_j\|_{L^2}
     \prod_{j=2}^{2k+1} \|P_{N_j}\phi_j\|_{L^2}^2,
\]
  where $\sigma = 2\beta-(2k+1)d$ and $L^2=L^2(\T^d)$.
  Finally, Schur's lemma implies
\[
   \sum_{\substack{\ell,\ell'\approx N_1/M:\\|\ell-\ell'|\gg1}}\langle \ell-\ell' \rangle^{-\beta}
     \|P_{\sR_\ell}\phi_j\|_{L^2}\|P_{\sR_{\ell'}}\phi_j\|_{L^2}
   \lesssim \|P_{N_1}\phi_1\|_{L^2(\T^d)}^2,
\]
  provided $\beta>1$.
\end{proof}

In the following let $\tau_0\subseteq[0,1]$ be any time interval and $k\geq3$.
We are going to prove the following proposition:
\begin{prop}\label{prop:trilinear_est_2d}
  There exists $\delta>0$ such that for all $\phi_1,\ldots,\phi_{k+1}\in L^2(\T^2)$ and dyadic
  numbers $N_1\geq\ldots\geq N_{k+1}\geq 1$ the following estimate holds true
\begin{multline*}
  \biggl\|\prod_{j=1}^{k+1} P_{N_j}e^{it\Delta} \phi_j \biggr\|_{L^2(\tau_0\times\T^2)}\\
  \lesssim \biggl(\frac{N_{k+1}}{N_1}+\frac{1}{N_2}\biggr)^{\delta} \|P_{N_1}\phi_1\|_{L^2(\T^2)}
    \prod_{j=2}^{k+1} N_j^{s_c} \|P_{N_j}\phi_j\|_{L^2(\T^2)}.
\end{multline*}
\end{prop}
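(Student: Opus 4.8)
The plan is to adapt Bourgain's method for irrational tori. Using almost-orthogonality in time one localizes the highest frequency to thin rectangles, and is then left with a bilinear interaction between the two highest frequencies (the remaining factors going into $L^\infty$), which is controlled by a lattice-point count based on Lemma~\ref{lem:point_est}; the irrationality of the $\alpha_j$ is what turns this count into a genuine gain. Throughout, $s_c=1-\tfrac1k$, so $1-s_c=\tfrac1k$.

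First dispose of the degenerate range $N_2\lesssim1$: then $N_2=\dots=N_{k+1}=1$ and the claim follows from $\|P_1e^{it\Delta}\phi_j\|_{L^\infty(\tau_0\times\T^2)}\lesssim\|P_1\phi_j\|_{L^2}$ (Bernstein), Hölder, and $\|P_{N_1}e^{it\Delta}\phi_1\|_{L^2(\tau_0\times\T^2)}\le\|P_{N_1}\phi_1\|_{L^2}$ (as $|\tau_0|\le1$), since the prefactor is then $\gtrsim1$. So assume $N_2\gg1$. If $N_1\gg N_2$, apply Lemma~\ref{lem:orth} — its proof does not use that the number of factors is odd, so use it with $k+1$ in place of $2k+1$, with a slightly larger interval $\tau_1$, and with $\sigma$ large — to obtain almost-disjoint $\sR_\ell\in\scR_{N_2,M}$, $M=\max\{N_2^2/N_1,1\}$, with $P_{N_1}=\sum_\ell P_{\sR_\ell}P_{N_1}$; this bounds the square of the left-hand side by $\sum_\ell\bigl\|P_{\sR_\ell}P_{N_1}e^{it\Delta}\phi_1\prod_{j=2}^{k+1}P_{N_j}e^{it\Delta}\phi_j\bigr\|_{L^2(\tau_1\times\T^2)}^2$ plus the negligible $N_2^{-\sigma}\prod_j\|P_{N_j}\phi_j\|_{L^2}^2$. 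For each $\ell$ apply Hölder in $L^2(\tau_1\times\T^2)$, placing the $k-1\ge2$ lowest waves into $L^\infty_{t,x}$ via $\|P_{N_j}e^{it\Delta}\phi_j\|_{L^\infty}\lesssim N_j\|P_{N_j}\phi_j\|_{L^2}$; each such factor exceeds its allotted weight $N_j^{s_c}$ only by $N_j^{1/k}$, and since $N_j\le N_2$ the total excess is $\le N_2^{(k-1)/k}=N_2^{s_c}$, which will be covered by the weight $N_2^{s_c}$ and the bilinear gain. What is left is $\|P_{\sR_\ell}P_{N_1}e^{it\Delta}\phi_1\cdot P_{N_2}e^{it\Delta}\phi_2\|_{L^2(\tau_1\times\T^2)}$.

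The heart of the matter is the bilinear estimate
\[
  \bigl\|P_{\sR}P_{N_1}e^{it\Delta}\phi_1\cdot P_{N_2}e^{it\Delta}\phi_2\bigr\|_{L^2(\tau_1\times\T^2)}
  \ \lesssim\ \Bigl(\tfrac{M}{N_2}\Bigr)^{\delta_0}\|P_{\sR}\phi_1\|_{L^2}\,\|P_{N_2}\phi_2\|_{L^2},\qquad\sR\in\scR_{N_2,M},
\]
for some $\delta_0>0$. To prove it one cuts off in time, passes to the space–time Fourier side, applies Cauchy–Schwarz and Hölder (at a well-chosen exponent) to the resulting convolution sum, and reduces to estimating $\ell^p$-norms, over $\kappa\in\Z$ and the output frequency $n$, of the counts $\#\{n_1\in\sR:|Q(n_1)+Q(n-n_1)-\kappa|\le r\}$; by Lemma~\ref{lem:point_est} these are controlled by $L^{p'}_t$-norms of exponential sums over $\sR$, and it is at this step that the irrationality of the $\alpha_j$ and the thinness $M$ of $\sR$ produce cancellation, upgrading the familiar $N_2^{0+}$ loss to the gain $(M/N_2)^{\delta_0}$. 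One then sums over $\ell$ using $\sum_\ell\|P_{\sR_\ell}\phi_1\|_{L^2}^2=\|P_{N_1}\phi_1\|_{L^2}^2$; since $M/N_2=\max\{N_2/N_1,1/N_2\}$ and $N_2^{s_c}\prod_{j=3}^{k+1}N_j^{-1/k}\ge1$, a short computation shows that for $\delta$ small enough (say $\delta<\min\{\delta_0,1/k\}$) the product of the gain with the $N_j^{1/k}$-losses is dominated by $(N_{k+1}/N_1+1/N_2)^{\delta}\prod_{j=2}^{k+1}N_j^{s_c}$, as required. In the remaining regime $N_1\approx N_2$, where the rectangle decomposition is void, one instead uses Hölder directly to split off the two highest waves in $L^2_{t,x}$ and bounds $\|P_{N_1}e^{it\Delta}\phi_1\cdot P_{N_2}e^{it\Delta}\phi_2\|_{L^2(\tau_0\times\T^2)}$ by the (essentially loss-free, again via the same circle of ideas around Lemma~\ref{lem:point_est}) $L^4$-type estimate on $\T^2$; here the budget from $s_c>0$ and the term $N_{k+1}/N_1$ in the prefactor suffice.

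I expect the main obstacle to be the bilinear counting step: extracting a genuine polynomial gain from Lemma~\ref{lem:point_est} rather than the usual $N_2^{0+}$ or merely logarithmic bound — in particular doing so uniformly over the output frequency $n$ and over the whole range $1\le M\le N_2$, where the geometry of $\sR$ (a $1\times N_2$ segment at the extreme) makes the exponential sum delicate — and then verifying that, across all frequency configurations, this gain together with the $N_j^{1/k}$-losses on the $k-1\ge2$ lowest waves (which is exactly where the hypothesis $k\ge3$ enters) combine to produce the single clean factor $(N_{k+1}/N_1+1/N_2)^\delta$ with a fixed $\delta>0$.
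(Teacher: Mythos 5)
Your outline follows the paper up to and including the reduction via Lemma~\ref{lem:orth}, but the step you yourself call the heart of the matter is false, and the way you allocate the remaining factors leaves no room to repair it. The claimed bilinear bound $\bigl\|P_{\sR}P_{N_1}e^{it\Delta}\phi_1\cdot P_{N_2}e^{it\Delta}\phi_2\bigr\|_{L^2(\tau_1\times\T^2)}\lesssim(M/N_2)^{\delta_0}\|P_{\sR}\phi_1\|_{L^2}\|P_{N_2}\phi_2\|_{L^2}$ cannot hold for any $\delta_0>0$: take $\phi_1(x)=e^{2\pi in_1\cdot x}$ with $n_1\in\sR$, so that $P_\sR P_{N_1}e^{it\Delta}\phi_1$ is unimodular and the left-hand side equals $|\tau_1|^{1/2}\|P_{N_2}\phi_2\|_{L^2}\approx\|\phi_1\|_{L^2}\|\phi_2\|_{L^2}$, while the right-hand side is $(M/N_2)^{\delta_0}\|\phi_1\|_{L^2}\|\phi_2\|_{L^2}=o(1)\cdot\|\phi_1\|_{L^2}\|\phi_2\|_{L^2}$ once $N_1\gg N_2\gg1$. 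More generally, no multilinear $L^2_{t,x}$ estimate for free solutions can carry a constant tending to zero, since single exponentials make the product unimodular. This is fatal rather than cosmetic, because your bookkeeping is exactly tight: placing the $k-1$ lowest factors in $L^\infty_{t,x}$ by Bernstein incurs an excess $\prod_{j=3}^{k+1}N_j^{1/k}$, which in the configuration $N_3=\cdots=N_{k+1}=N_2$ equals the entire weight $N_2^{s_c}$ allotted to $\phi_2$, so the surviving bilinear factor would have to be estimated with a strictly decaying constant --- impossible. No refinement of the lattice-point count behind Lemma~\ref{lem:point_est} can rescue this; the obstruction is the single-frequency example, not the counting.

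The paper's proof avoids the trap by never putting a factor into $L^\infty_{t,x}$ and never invoking a pure bilinear $L^2_{t,x}$ bound. After the same reduction to $P_\sR P_{N_1}$ with $\sR\in\scR_{N_2,M}$, it applies H\"older with mixed norms chosen just off the endpoints: the rectangle piece is placed in $L^{p_1}_tL^{q_1}_x$ with $p_1,q_1$ slightly above $6$, where Corollary~\ref{coro:linear_est_2d}\ref{it:2d_rect} produces the factor $(M/N_2)^{\frac12-\frac3{q_1}}$ at a cost of only $N_2^{\frac12+}$ --- and note this gain comes from interpolating the cube estimate with the elementary Cauchy--Schwarz bound $\lesssim(N_2M)^{1/2}$ on the rectangle, not from any delicate exponential sum over thin rectangles; the product $P_{N_2}e^{it\Delta}\phi_2\,P_{N_3}e^{it\Delta}\phi_3$ is placed in $L^{p_2}_tL^3_x$ with $p_2$ slightly above $3$ and controlled by the new trilinear estimate of Lemma~\ref{lem:trilinear_est_2d} (this is where Lemma~\ref{lem:point_est} and Bourgain's bound on $\|\sum_{\varDelta n\approx M}e^{2\pi in^2t}\|_{L^{2p}_t}$ actually enter, and only on cubes), costing $N_2^{\frac16+}N_3^{\frac12+}$; the remaining $k-2$ factors go into $L^{p_3}_tL^{q_2}_x$ by Sobolev embedding at cost $N_j^{1-}$ each. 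The key point of the bookkeeping is that the second and third frequencies are charged only about $N_2^{1/2}$ and $N_3^{1/2}$, strictly below $N_2^{s_c}N_3^{s_c}$ because $s_c\geq\frac23$ for $k\geq3$, and this surplus absorbs the overcharge $N_j^{1-}>N_j^{s_c}$ on the factors $j\geq4$. So the hypothesis $k\geq3$ enters through $s_c>\frac12$, not through having $k-1\geq2$ factors to discard in $L^\infty$.
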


Before we start proving Proposition~\ref{prop:trilinear_est_2d}, we turn to the following
trilinear Strichartz estimate, which improves \cite[Lemma~5.9]{GOW13} for $d=2$, using ideas of \cite{B07}.
Combined with Lemma~\ref{lem:orth}, this is the essence of the proof of Proposition~\ref{prop:trilinear_est_2d}.
Note that the implicit constants depend on $C$ in \eqref{eq:irr_torus} and the local time interval $\tau_0$.
\begin{lemma}\label{lem:trilinear_est_2d}
  Let $2<p\leq4$. Then, for all $N,M\geq 1$ with $N\geq M$, $\sC_1\in\scC_N$,
  $\sC_2,\sC_3\in\scC_M$, and $\phi_1,\phi_2,\phi_3\in L^2(\T^2)$ we have
\[
   \biggl\|\prod_{j=1}^3P_{\sC_j} e^{it\Delta} \phi_j \biggr\|_{L^p(\tau_0,L^2(\T^2))}
   \lesssim M^{2-\frac2p} \prod_{j=1}^3\|P_{\sC_j}\phi\|_{L^2(\T^2)}.
\]
\end{lemma}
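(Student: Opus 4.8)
The plan is to peel off the largest cube by Hölder and Bernstein, reducing the trilinear estimate to a single–cube Strichartz estimate, and then to prove that single–cube estimate in the style of \cite{B07}: the Hausdorff--Young--type Lemma~\ref{lem:point_est} turns it into a lattice–point count for the quadratic form $Q$, and that count is the heart of the matter.

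Write $u_j=P_{\sC_j}e^{it\Delta}\phi_j$. Using $\|fgh\|_{L^2(\T^2)}\le\|f\|_{L^2}\|gh\|_{L^\infty}$ and the $L^2$–unitarity of $e^{it\Delta}$,
\[
  \|u_1u_2u_3\|_{L^p_t L^2_x}\ \le\ \|u_1\|_{L^\infty_t L^2_x}\,\|u_2u_3\|_{L^p_t L^\infty_x}\ =\ \|P_{\sC_1}\phi_1\|_{L^2}\,\|u_2u_3\|_{L^p_t L^\infty_x}.
\]
The product $u_2u_3$ has spatial frequencies in $\sC_2+\sC_3$, a set of $\lesssim M^2$ points, so Bernstein in $x$ gives $\|u_2u_3(t)\|_{L^\infty_x}\lesssim M^{2/p}\|u_2u_3(t)\|_{L^p_x}$, and then Hölder in $(t,x)$ gives $\|u_2u_3\|_{L^p_{t,x}}\le\|u_2\|_{L^{2p}_{t,x}}\|u_3\|_{L^{2p}_{t,x}}$. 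Since $\tfrac2p+2\bigl(1-\tfrac2p\bigr)=2-\tfrac2p$, the lemma follows from the single–cube Strichartz bound
\[
  \|e^{it\Delta}P_{\sC}\phi\|_{L^{2p}([0,1]\times\T^2)}\ \lesssim\ M^{\,1-\frac2p}\,\|P_{\sC}\phi\|_{L^2(\T^2)},\qquad \sC\in\scC_M .
\]
For this I would use the Galilean symmetry of $e^{it\Delta}$ to recentre $\sC$ at the origin, so that $\sC\subseteq\{|n|\lesssim M\}$ and $e^{it\Delta}P_\sC\phi(x)=\sum_n\widehat\phi(n)e^{2\pi i(n\cdot x+Q(n)t)}$; expanding $\|e^{it\Delta}P_\sC\phi\|_{L^{2p}_{t,x}}^{2p}$ (for $2p$ an even integer, the rest by interpolation) and applying Plancherel yields a weighted sum over tuples $(n_i),(m_i)$ with $\sum n_i=\sum m_i$ of $\langle\sum Q(n_i)-\sum Q(m_i)\rangle^{-\beta}$, the weight replacing the exact $t$–orthogonality which fails because the $Q(n)$ are not integers. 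After a dyadic pigeonholing of the coefficients (so that $|\widehat\phi|$ is essentially constant), Lemma~\ref{lem:point_est} converts this into an $\ell^{p}$–norm of the level–set counts $\#\{\,n:|n|\lesssim M,\ Q(n)\in[k,k+1)\,\}$, and the basic input is that $\#\{\,n\in\Z^2:|n|\lesssim M,\ Q(n+v)\in[k,k+1)\,\}\lesssim M$ for all $v\in\R^2,k$, uniformly in the $\alpha_j$ — fix one coordinate of $n$, and $Q$ is $O(1)$–to–one in the other over any unit–length interval. Feeding the $\ell^{p}$–averaged form of these counts back through Lemma~\ref{lem:point_est} produces the power $M^{1-2/p}$.

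The hard part is precisely this last step. A crude, $\ell^\infty$ use of the bound $\lesssim M$ on the level–set counts gives only the exponent $M^{3/2}$ — the value of $M^{2-2/p}$ at $p=4$ — so the true $p$–dependence must come from the $\ell^{p}$–averaging that Lemma~\ref{lem:point_est} affords, and one has to keep track of how this interacts with the dyadic decomposition of the coefficients. It is equally essential to resist divisor–type solution counting: on the irrational torus $Q$ is not integer–valued, so such counts would cost an $\varepsilon$–power of $M$, whereas Lemma~\ref{lem:point_est} — which is Hausdorff--Young, with constant $1$ — keeps the estimate clean; this is exactly why one argues as in \cite{B07} rather than, say, via decoupling. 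Finally the range $2<p\le4$ is the natural one here: $M^{2-2/p}$ runs down to the borderline value $M$ as $p\to2$, and no such clean estimate survives at $p=2$ itself.
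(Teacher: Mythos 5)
Your reduction is where the proof breaks down. The H\"older/Bernstein step is arithmetically consistent, but it trades the mixed-norm trilinear estimate for the diagonal single-cube bound $\|e^{it\Delta}P_{\sC}\phi\|_{L^{2p}(\tau_0\times\T^2)}\lesssim M^{1-\frac2p}\|P_\sC\phi\|_{L^2}$, i.e.\ the pure $L^q_{t,x}$ Strichartz estimate at the sharp scaling exponent $M^{1-\frac4q}$ with \emph{no} $\varepsilon$-loss, for all $4<q\le 8$ --- a strictly harder statement than the lemma, not an easier one, and your sketch does not prove it. First, ``$2p$ an even integer, the rest by interpolation'' cannot reach $4<2p<6$: the only available lower endpoint is $L^4_{t,x}$, where the scaling-sharp bound fails even on the rational torus (logarithmic loss) and is at best known with an $\varepsilon$-loss on irrational tori, which would contaminate the clean exponent; note that the proof of Proposition~\ref{prop:trilinear_est_2d} applies the lemma with exponent $r$ arbitrarily close to $2$, i.e.\ exactly in this unreachable regime. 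Second, the counting problem you isolate is the wrong one: expanding $\|u\|_{L^{2m}_{t,x}}^{2m}=\|u^m\|_{L^2_{t,x}}^2$ and using Plancherel in both variables forces an $\ell^2$ (hence, after Cauchy--Schwarz, an $\ell^\infty$) bound on the level-set counts of $\sum_iQ(n_i)$ restricted to $\sum_i n_i=a$, which is a $2(m-1)$-dimensional count, not the two-dimensional count $\#\{n:|n|\lesssim M,\ Q(n)\in[k,k+1)\}\lesssim M$ you quote. Already at $m=3$ the sup over $(a,k)$ of that count is only $\lesssim M^3$ by slicing, giving $\|u\|_{L^6_{t,x}}\lesssim M^{1/2}$ instead of $M^{1/3}$; improving it to $M^2$ requires equidistribution, i.e.\ precisely the $\ell^r$-averaging you defer to ``the hard part''. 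And that averaging is not available in your formulation: Lemma~\ref{lem:point_est} yields $\|\#\fS_k\|_{\ell^r}$ with $r\ge2$ only when the time integration is taken in $L^{r/(r-1)}$ with $r/(r-1)\le2$, whereas the $L^{2p}_{t,x}$ norm pins the time exponent at $2$.

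This is exactly why the paper never leaves the mixed norm $L^p(\tau_0,L^2(\T^2))$ and never discards the trilinear structure: Plancherel in $x$ gives an $\ell^2_a L^p_t$ quantity; Hausdorff--Young in $t$ at exponent $\frac{p}{p-1}<2$ followed by two applications of H\"older produces $\bigl(\sum_\ell(\#\fS_\ell)^{\frac{p}{p-2}}\bigr)^{\frac{p-2}{2p}}$ with $\frac{p}{p-2}\ge2$ (this is where $p\le4$ enters); the level set $\fS_\ell$ involves only the two \emph{small} cubes and diagonalizes to $|Q(\widetilde n)+3Q(\widetilde m)-\ell|\le4$; and Lemma~\ref{lem:point_est} together with Bourgain's $\Lambda(p)$ bound $\bigl\|\sum_{\varDelta n\approx M}e^{2\pi in^2t}\bigr\|_{L^{2p}_t}\lesssim M^{1-\frac1p}$ (valid since $2p>4$, which is where $p>2$ enters, rather than your heuristic about the borderline value of $M^{2-2/p}$) closes the count. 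None of this machinery survives once you have passed to a diagonal $L^{2p}_{t,x}$ norm of a single linear solution, so the core of the lemma is missing from your argument.
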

\begin{proof}
  This proof is a trilinear variant of the poof of \cite[Proposition~1.1]{B07}. Hence,
  we omit some details and refer the reader also to e.g.\ \cite[Proof of Proposition~2.2]{GOW13}.
  We will write $L_t^pL^q_x:=L^p(\tau_0,L^q(\T^2))$ and $L_t^p:=L^p(\tau_0)$ for brevity. 

  The left hand side may be estimated by
\begin{equation*}
  \biggl[ \sum_{a\in\Z^2} \biggl\| \sum_{\substack{n\in\sC_2,\\m\in\sC_3}} \widehat \phi_1(a-n-m) \widehat \phi_2(n)
          \widehat \phi_3(m) e^{2\pi i(Q(a-n-m)+Q(n)+Q(m))t} \biggr\|_{L^p_t}^2 \biggr]^{\frac12}
\end{equation*}
  using Plancherel's identity with respect to $x$ and Minkowski's inequality.
  Applying Hausdorff--Young (cf.\ \cite[Lemma~2.1]{GOW13}) and setting $c_{j,n}:=|\widehat \phi_j(n)|$ yields
\[
   \|\cdots\|_{L^p_t}
   \lesssim \biggl[ \sum_{k\in\Z} \biggl| \sum_{|Q(a-n-m)+Q(n)+Q(m)-k|\leq \frac12} c_{1,a-n-m}c_{2,n}c_{3,m}
            \biggr|^{\frac{p}{p-1}} \biggr]^{\frac{p-1}{p}}.
\]
  One easily verifies that $|Q(a-n-m)+Q(n)+Q(m)-k|\leq \frac12$ may be written as
\[
   |Q(3\widetilde n - 2a) + 3Q(\widetilde m) + 2Q(a)-6k| \leq 3,
\]
  where $\widetilde n:=n+m$ and $\widetilde m:=n-m$. Hence,
\[
   \#\bigl\{(n,m)\in\sC_2\times\sC_3 : |Q(a-n-m)+Q(n)+Q(m)-k|\leq \tfrac12\bigr\}
   \lesssim \#\fS_{\ell},
\]
  with
\[
   \fS_{\ell} := \bigl\{(\widetilde n,\widetilde m)\in\widetilde\sC_2\times\widetilde\sC_3 :
        |Q(\widetilde n) + 3Q(\widetilde m) - \ell|\leq4\bigr\},
\]
  $\ell:=\lfloor 6k-2Q(a)\rfloor\in\Z$, and cubes $\widetilde\sC_2,\widetilde\sC_3\in\scC_{3M}$.
  This conclusion and two applications of H\"older's estimate yield (cf.\ \cite[(1.1.5)--(1.1.7)]{B07})
\[
   \|\cdots\|_{L^p_t} \lesssim
   \biggl( \sum_{\ell\in\Z} (\#\fS_\ell)^{\frac{p}{p-2}} \biggr)^{\frac{p-2}{2p}}
   \biggl( \sum_{\substack{n\in\sC_2,\\m\in\sC_3}} c_{1,a-n-m}^2c_{2,n}^2c_{3,m}^2 \biggr)^{\frac12},
\]
  thus we arrive at
\[
   \biggl\|\prod_{j=1}^3P_{\sC_j} e^{it\Delta} \phi_j \biggr\|_{L^p(\tau_0,L^2(\T^2))}
   \lesssim \biggl( \sum_{\ell\in\Z}(\#\fS_\ell)^{\frac{p}{p-2}}\biggr)^{\frac{p-2}{2p}}
     \prod_{j=1}^3 \|P_{\sC_j}\phi_j\|_{L^2(\T^2)}.
\]
  The assumption $p\leq4$ ensures that $\frac{p}{p-2}\geq 2$ and hence, by Lemma~\ref{lem:point_est} we may estimate
\begin{align*}
   \biggl( \sum_{\ell\in\Z} (\#\fS_\ell)^{\frac{p}{p-2}} \biggr)^{\frac{p-2}{p}}
   & \lesssim \biggl\| \prod_{j=1}^2 \sum_{\varDelta \widetilde n_j,\varDelta \widetilde m_j\approx M} 
        e^{2\pi i\alpha_j (\widetilde n_j^2+ 3\widetilde m_j^2)t} \biggr\|_{L_t^\frac{p}{2}(I)} \\ 
   & \lesssim \biggl\| \sum_{\varDelta n\approx M} e^{2\pi in^2t} \biggr\|_{L^{2p}_t(I)}^4
     \lesssim M^{4(1-\frac{1}{p})}
\end{align*}
  for some compact interval $I\subseteq\R$, provided $p>2$. Here, we use the notation
  $\sum_{\varDelta n\approx N} = \sum_{n=a}^{a+cN}$
  for some $a\in\Z$ and harmless $c\in\N$. The last inequality goes back to Bourgain
  \cite[Proposition~1.10 and Section~4]{B89}. However, we need a slight
  variation here, which may be found in \cite[Lemma~3.1]{H12}. This implies the desired estimate.
\end{proof}
\begin{coro}\label{coro:linear_est_2d}
  Let $p>6$.
  \begin{enumerate}
  \item\label{it:2d_cube} For every $N\geq1$, $\sC\in\scC_N$ and $\phi\in L^2(\T^2)$ we have
\[
   \|P_\sC e^{it\Delta} \phi\|_{L^p(\tau_0,L^6(\T^2))} \lesssim N^{\frac23-\frac{2}{p}} \|P_\sC\phi\|_{L^2(\T^2)}.
\]
  \item\label{it:2d_rect} Let $6\leq q<p$. Then, for all $N,M\geq1$ with $N\geq M$, $\sR\in\scR_{N,M}$
    and $\phi\in L^2(\T^2)$ it holds that
\[
   \|P_{\sR}e^{it\Delta} \phi\|_{L^p(\tau_0,L^q(\T^2))}
   \lesssim N^{\frac12+\frac1q-\frac{2}{p}} M^{\frac12-\frac3q} \|P_\sR\phi\|_{L^2(\T^2)}.
\]
  \end{enumerate}
\end{coro}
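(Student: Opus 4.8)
The plan is to derive both estimates from Lemma~\ref{lem:trilinear_est_2d} by a simple interpolation-and-specialization argument, with the $L^6$ bounds playing the role that a single $L^6$ Strichartz estimate would play in the classical setting. For part~\eqref{it:2d_cube}, I would first observe that taking $\phi_1=\phi_2=\phi_3=\phi$ and $\sC_1=\sC_2=\sC_3=\sC\in\scC_N$ (so $N=M$) in Lemma~\ref{lem:trilinear_est_2d} gives
\[
   \bigl\|(P_\sC e^{it\Delta}\phi)^3\bigr\|_{L^p(\tau_0,L^2(\T^2))}
   \lesssim N^{2-\frac2p}\|P_\sC\phi\|_{L^2(\T^2)}^3
\]
for $2<p\le4$. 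Since $(P_\sC e^{it\Delta}\phi)^3$ has spatial $L^2$-norm equal to the cube of the spatial $L^6$-norm of $P_\sC e^{it\Delta}\phi$, the left-hand side is $\|P_\sC e^{it\Delta}\phi\|_{L^{3p}(\tau_0,L^6(\T^2))}^3$. Taking cube roots and writing $p'=3p$ (which ranges over $6<p'\le12$) yields the claimed bound $\|P_\sC e^{it\Delta}\phi\|_{L^{p'}L^6}\lesssim N^{\frac23-\frac{2}{3}\cdot\frac{3}{p'}}\|P_\sC\phi\|_{L^2}=N^{\frac23-\frac{2}{p'}}\|P_\sC\phi\|_{L^2}$ for $6<p'\le12$. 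For $p'>12$ one simply interpolates (or uses the trivial inclusion $L^{p_1}(\tau_0)\hookrightarrow L^{p_2}(\tau_0)$ for $p_1\ge p_2$ on a bounded interval) between the endpoint $p'=12$ and $p'=\infty$, where the $L^\infty_tL^6_x$ bound follows from Bernstein on the cube $\sC$: $\|P_\sC e^{it\Delta}\phi\|_{L^6(\T^2)}\lesssim N^{2(\frac12-\frac16)}\|P_\sC\phi\|_{L^2}=N^{\frac23}\|P_\sC\phi\|_{L^2}$ uniformly in $t$. Note the exponent $\frac23-\frac2{p'}$ is monotone and matches at $p'=12$, so the interpolation is clean.

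For part~\eqref{it:2d_rect}, I would decompose the rectangle $\sR\in\scR_{N,M}$ (after applying the orthogonal transformation and translation in its definition, which commute with $e^{it\Delta}$ only up to harmless modifications — here one uses that the Strichartz bounds are invariant under the relevant symmetries, or works directly with the tiling) into roughly $N/M$ disjoint cubes $\sC_i\in\scC_M$. By the same orthogonality-in-space argument used at the start of the proof of Lemma~\ref{lem:orth}, or directly by $\ell^2$-orthogonality of the pieces in $L^2(\T^2)$ combined with the frequency-disjointness, one controls $\|P_\sR e^{it\Delta}\phi\|_{L^p_tL^q_x}$ by summing the cube contributions; since $q\ge6\ge2$ and $p\ge2$, one can afford to put the $\ell^2$ sum outside (via Minkowski in $L^{p/2}_tL^{q/2}_x$ after squaring, or more crudely via the triangle inequality picking up an extra $(N/M)^{1-1/\min(p,q)}$ factor — but the sharp route is $\ell^2$-orthogonality). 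Each cube piece is estimated by first interpolating part~\eqref{it:2d_cube} (available at $L^6_x$) against the trivial $L^\infty_tL^2_x$ bound $\|P_{\sC_i}e^{it\Delta}\phi\|_{L^2(\T^2)}=\|P_{\sC_i}\phi\|_{L^2}$ — using Bernstein $\|P_{\sC_i}f\|_{L^q}\lesssim M^{2(\frac1{2}-\frac1q)}\|P_{\sC_i}f\|_{L^2}$ is too lossy, so instead interpolate the space-time norms: from $\|P_{\sC_i}e^{it\Delta}\phi\|_{L^pL^6}\lesssim M^{\frac23-\frac2p}\|P_{\sC_i}\phi\|_{L^2}$ (valid for $p>6$) and $\|P_{\sC_i}e^{it\Delta}\phi\|_{L^\infty L^\infty}\lesssim M\|P_{\sC_i}\phi\|_{L^2}$ (Bernstein to $L^\infty_x$) one obtains, for $6\le q<p$, $\|P_{\sC_i}e^{it\Delta}\phi\|_{L^pL^q}\lesssim M^{1-\frac3q-\frac2p+\frac{6}{pq}+\cdots}\|P_{\sC_i}\phi\|_{L^2}$; choosing the interpolation parameter $\theta=6/q$ so the $L^6_x\to L^q_x$ scales align gives exactly the exponent $M^{\frac12-\frac3q}\cdot(\text{something in }N)$ after reassembling. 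Summing over the $\sim N/M$ cubes with $\ell^2$-orthogonality and collecting powers of $N$ and $M$ produces $N^{\frac12+\frac1q-\frac2p}M^{\frac12-\frac3q}$; I would double-check the bookkeeping by testing $q=6$, $M=N$, where the bound must reduce to part~\eqref{it:2d_cube}, i.e. to $N^{\frac23-\frac2p}$, and indeed $\frac12+\frac16-\frac2p+\frac12-\frac12=\frac23-\frac2p$.

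The main obstacle I anticipate is not any single estimate but the exponent accounting in part~\eqref{it:2d_rect}: one must interpolate in the space variable (to pass from $L^6_x$ to $L^q_x$) and in the time variable (to pass from a fixed $L^p_t$ exponent to a variable one) simultaneously, keep track of how the cube-size $M$ enters through Bernstein versus through Lemma~\ref{lem:trilinear_est_2d}, and then sum $N/M$ pieces without losing more than the stated powers. A secondary technical point is justifying the orthogonality step: the pieces $P_{\sC_i}e^{it\Delta}\phi$ are frequency-disjoint but one is taking an $L^q_x$ norm with $q\neq2$, so genuine orthogonality is only available after squaring and using $q/2\ge3>1$ together with the disjoint supports — this is standard (it is the same device as in Lemma~\ref{lem:orth}), but it is where the constraint $q\ge6$, equivalently $q/2\ge3$, is genuinely used rather than merely convenient. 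Once these two points are handled, everything else is interpolation and collecting exponents.
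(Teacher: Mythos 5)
Part \ref{it:2d_cube} of your proposal is correct and is exactly the paper's argument: cube the $L^6_x$ norm to reduce to Lemma~\ref{lem:trilinear_est_2d} with three equal cubes (covering $6<p\le12$), then interpolate with the Bernstein bound at $p=\infty$.

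Part \ref{it:2d_rect} contains a genuine gap. The summation step you rely on --- ``$\ell^2$-orthogonality'' of the $\sim N/M$ frequency-disjoint cube pieces in $L^p_tL^q_x$ with $q\ge 6$ --- is false: for congruent disjoint frequency cubes and $q>2$ one does \emph{not} have $\|\sum_i f_i\|_{L^q_x}\lesssim\bigl(\sum_i\|f_i\|_{L^q_x}^2\bigr)^{1/2}$ uniformly in the number of pieces (test $f_i(x)=e^{2\pi i\xi_i\cdot x}g(x)$ with $g$ a fixed bump: the left side grows like a Dirichlet kernel, $K^{1-1/q}$, while the right side is $K^{1/2}$). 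Neither of your suggested justifications repairs this: after squaring, the cross terms $f_i\overline{f_j}$ are \emph{not} frequency-disjoint, and the mechanism in Lemma~\ref{lem:orth} is a genuinely different, $L^2$-based almost-orthogonality argument that exploits separation of the values $Q(n)$ and produces an error term, not a clean $\ell^2$ bound in $L^q$. Moreover, even granting the $\ell^2$ sum, your bookkeeping does not close: each cube piece is bounded by $M^{1-\frac2q-\frac2p}\|P_{\sC_i}\phi\|_{L^2}$, and an $\ell^2$ sum of these reproduces $\|P_\sR\phi\|_{L^2}$ with \emph{no} power of $N$ at all, which is not the stated exponent $N^{\frac12+\frac1q-\frac2p}M^{\frac12-\frac3q}$; your only numerical check is the degenerate case $M=N$, where the decomposition is vacuous. (The legitimate substitute, $\ell^{q'}$ summation over congruent cubes, would in fact yield a bound that implies the corollary, but that is a different argument from the one you wrote and still not the paper's.) The paper's proof avoids decomposition entirely: since $\sR\in\scR_{N,M}\subseteq\scC_N$, part \ref{it:2d_cube} applies directly to $P_\sR$ at scale $N$, and one interpolates
\[
   \|f\|_{L^p_tL^q_x}\le\|f\|_{L^{6p/q}_t L^6_x}^{6/q}\,\|f\|_{L^\infty_{t,x}}^{1-6/q},
   \qquad
   \|P_\sR e^{it\Delta}\phi\|_{L^\infty_{t,x}}\le\bigl(\#(\sR\cap\Z^2)\bigr)^{1/2}\|P_\sR\phi\|_{L^2}\lesssim (NM)^{1/2}\|P_\sR\phi\|_{L^2},
\]
so that the entire gain in $M$ comes from the lattice-point count of the rectangle via Cauchy--Schwarz, and the exponents $N^{\frac12+\frac1q-\frac2p}M^{\frac12-\frac3q}$ drop out immediately.
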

\begin{proof}
  The first estimate is a direct consequence of Lemma~\ref{lem:trilinear_est_2d}, provided $p\leq12$. 
  Bernstein's inequality implies the result for $p=\infty$:
\[
   \|P_\sC e^{it\Delta} \phi\|_{L^\infty(\tau_0\times\T^2)} \lesssim N\|P_\sC\phi\|_{L^2(\T^2)}.
\]
  For $12<p<\infty$, the desired estimate follows from H\"older's estimate and the estimates for $p=12$ and
  $p=\infty$.

  The second statement follows from \ref{it:2d_cube}, the estimate
\[
   \|P_{\sR}e^{it\Delta}\phi\|_{L^\infty(\tau_0\times \T^2)}
   \leq \bigl(\#(\sR\cap\Z^2)\bigr)^{\frac12} \|P_{\sR}\phi\|_{L^2(\T^2)}
   \lesssim (NM)^{\frac12} \|P_{\sR}\phi\|_{L^2(\T^2)},
\]
  which may easily be obtained by applying Cauchy-Schwarz in Fourier space, and H\"older's inequality.
  The conclusion works as follows: Set $f(t,x):=P_{\sR}e^{it\Delta}\phi(x)$, $\varepsilon=\frac{6p}{q}-6>0$ and
  $\vartheta=\frac{6}{q}\leq1$. Then,
\[
   \||f|^\vartheta |f|^{1-\vartheta}\|_{L^p_tL^q_x}
   \leq \|f\|_{L^{6+\varepsilon}_tL^6_x}^\vartheta\|f\|_{L^\infty_{t,x}}^{1-\vartheta}
   \lesssim N^{\frac12+\frac1q-\frac{2}{p}} M^{\frac12-\frac3q} \|P_{\sR}\phi\|_{L^2(\T^2)}.
\]
\end{proof}

\begin{proof}[Proof of Proposition~\ref{prop:trilinear_est_2d}]
  Thanks to Lemma~\ref{lem:orth} it suffices to replace $P_{N_1} e^{it\Delta} \phi_1$ by 
  $P_\sR P_{N_1} e^{it\Delta} \phi_1$, where $\sR\in\scR_{N_2,M}$ with $M=\max\bigl\{\frac{N_2^2}{N_1},1\bigr\}$,
  provided we magnify the time interval to an open interval $\tau_1\supset\overline{\tau_0}$.

  Let $6<p_1,q_1<8$ and $3<p_2\leq\frac{24}{5}$. Furthermore, let $p_3$ and $q_2$ be defined via the relations
  $\frac12=\frac{1}{p_1}+\frac{1}{p_2}+\frac{k-2}{p_3}$ and $\frac12=\frac{1}{q_1}+\frac13+\frac{k-2}{q_2}$,
  respectively. By H\"older's estimate the following holds true:
\begin{multline*}
   \biggl\|P_\sR P_{N_1} e^{it\Delta} \phi_1 \prod_{j=2}^{k+1}P_{N_j} e^{it\Delta} \phi_j\biggr\|_{L^2_{t,x}}
   \leq   \|P_{\sR} P_{N_1}e^{it\Delta}\phi_1\|_{L^{p_1}_tL^{q_1}_x}\\
   \times \|P_{N_2}e^{it\Delta}\phi_2P_{N_3}e^{it\Delta}\phi_3\|_{L^{p_2}_tL^3_x}
          \prod_{j=4}^{k+1} \| P_{N_j}e^{it\Delta}\phi_j\|_{L^{p_3}_tL^{q_2}_x},
\end{multline*}
  where $L^r_tL^s_x:=L^r(\tau_1,L^s(\T^2))$ and $L^2_{t,x}:=L^2_tL^2_x$.
  Let $f_j:=P_{N_j}e^{it\Delta}\phi_j$, $j=2,3$, then we treat the bilinear term as follows:
\[
   \|f_2f_3\|_{L^{p_2}_tL^3_x}^2
   = \|f_2^2f_3^2\|_{L^{\frac{p_2}{2}}_t L^{\frac32}_x}
   \leq \|f_2f_3^2\|_{L^r_tL^2_x} \|f_2\|_{L^s_tL^6_x},
\]
  where $s>6$ and $\frac{2}{p_2}=\frac{1}{r}+\frac{1}{s}$. Note that $p_2\leq\frac{24}{5}$ ensures that
  $r\leq 4$. By Lemma~\ref{lem:trilinear_est_2d} and Corollary~\ref{coro:linear_est_2d}, we have for all $\eta>0$
\begin{equation}\label{eq:bilin_est_2d}
   \|P_{N_2}e^{it\Delta}\phi_2P_{N_3}e^{it\Delta}\phi_3\|_{L^{p_2}_tL^3_x} \leq N_2^{\frac16+\eta}N_3^{\frac76-\frac{2}{p_2}-\eta}.
\end{equation}
  Corollary~\ref{coro:linear_est_2d}, \eqref{eq:bilin_est_2d} and Sobolev's embedding imply
\begin{multline*}
   \biggl\|P_\sR P_{N_1} e^{it\Delta} \phi_1 \prod_{j=2}^{k+1}P_{N_j} e^{it\Delta} \phi_j\biggr\|_{L^2_{t,x}}
   \lesssim \Bigl(\frac{M}{N_2}\Bigr)^{\frac12-\frac{3}{q_1}} N_2^{\frac76-\frac{2}{p_1}-\frac{2}{q_1}+\eta} 
          N_3^{\frac76-\frac{2}{p_2}-\eta}\\
   \times \prod_{j=4}^{k+1}N_j^{1-\frac{2}{k-2}(\frac23-\frac{1}{p_1}-\frac{1}{p_2}-\frac{1}{q_1})}
          \|P_{\sR}P_{N_1}\phi_j\|_{L^2_x}\prod_{j=2}^{k+1} \|P_{N_j}\phi_j\|_{L^2_x},
\end{multline*}
  where $L^2_x:=L^2(\T^2)$.
  For all $0<\nu_1,\nu_2\ll1$, there exist $\delta>0$ and $p_1,q_1>6$ sufficiently close to $6$ as well as $p_2>3$
  sufficiently close to $3$ such that
\begin{flalign*}
   \;\; \text{(i)\;\,}   & \Bigl(\frac{M}{N_2}\Bigr)^{\frac12-\frac{3}{q_1}} = \Bigl(\frac{M}{N_2}\Bigr)^\delta &
      \text{(i\/i)\;\,} & N_2^{\frac76-\frac{2}{p_1}-\frac{2}{q_1}+\eta} = N_2^{\frac12+\nu_1+\eta} \;\;  \\
   \;\; \text{(i\/i\/i)\;\,} & N_3^{\frac76-\frac{2}{p_2}-\eta} = N_3^{\frac12+\nu_2-\eta} &
      \text{(i\/v)\;\,} & N_j^{1-\frac{2}{k-2}(\frac23-\frac{1}{p_1}-\frac{1}{p_2}-\frac{1}{q_1})}
                          = N_j^{1-\frac{\nu_1+\nu_2}{k-2}}, \;\;
\end{flalign*}
  where $j\in\{4,\ldots,k+1\}$.
  Since $\frac12<s_c<1$ for $k\geq3$, we may choose $0<\nu_1,\nu_2,\eta\ll 1$ small enough to get
\begin{multline*}
   \biggl\|P_\sR P_{N_1} e^{it\Delta} \phi_1 \prod_{j=2}^{k+1}P_{N_j} e^{it\Delta} \phi_j\biggr\|_{L^2_{t,x}}\\
   \lesssim 
      \Bigl(\frac{N_{k+1}}{N_1}+\frac{1}{N_2}\Bigr)^\delta
      \|P_{\sR}P_{N_1}\phi_1\|_{L^2_x}\prod_{j=2}^{k+1}N_j^{s_c}\|P_{N_j}\phi_j\|_{L^2_x}.
\end{multline*}
\end{proof}

\section{The three dimensional case}\label{sect:3d}
\noindent
Similar to the previous section, let $\tau_0\subseteq[0,1]$ be any time interval.
\begin{prop}\label{prop:trilinear_est}
  For all $\varepsilon>0$, there exists $\delta>0$ such that for all $\phi_1,\phi_2,\phi_3\in L^2(\T^3)$ and
  dyadic numbers $N_1\geq N_2\geq N_3\geq 1$ the following estimate holds true:
\[
  \biggl\|\prod_{j=1}^3P_{N_j}e^{it\Delta} \phi_j \biggr\|_{L^2(\tau_0\times\T^3)}
  \lesssim \biggl(\frac{N_3}{N_1}+\frac{1}{N_2}\biggr)^{\delta}
    N_2^{\frac34+\varepsilon} N_3^{\frac54-\varepsilon} \prod_{j=1}^3 \|P_{N_j}\phi_j\|_{L^2(\T^3)}.
\]
\end{prop}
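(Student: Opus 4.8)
\textbf{Proof plan for Proposition~\ref{prop:trilinear_est}.}
The strategy parallels the two-dimensional argument: first reduce the general frequency-localized product to the case where the top frequency is further localized to a thin rectangle via Lemma~\ref{lem:orth}, then apply a trilinear $L^2$ Strichartz estimate for such rectangles. The plan is to invoke Lemma~\ref{lem:orth} with $\sigma$ large and $2k+1=3$ (i.e.\ $k=1$), which (after enlarging $\tau_0$ to an open $\tau_1$) replaces $P_{N_1}e^{it\Delta}\phi_1$ by $P_{\sR_\ell}P_{N_1}e^{it\Delta}\phi_1$ with $\sR_\ell\in\scR_{N_2,M}$, $M=\max\{N_2^2/N_1,1\}$, at the cost of a summable and negligible error $N_2^{-\sigma}\prod\|P_{N_j}\phi_j\|_{L^2}^2$. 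After this reduction it suffices to bound $\|P_{\sR}P_{N_1}e^{it\Delta}\phi_1\,P_{N_2}e^{it\Delta}\phi_2\,P_{N_3}e^{it\Delta}\phi_3\|_{L^2(\tau_1\times\T^3)}$, with $\sR\in\scR_{N_2,M}$, by the claimed right-hand side with the factor $(M/N_2)^\delta$ in place of $(N_3/N_1+1/N_2)^\delta$ (note $M/N_2 = \max\{N_2/N_1, 1/N_2\} \le N_3/N_1 + 1/N_2$ is false in general, so more carefully $M/N_2 \le N_2/N_1$ and $1/N_2$; one takes the min, and in fact $M/N_2 \lesssim (N_3/N_1 + 1/N_2)$ holds because $N_2/N_1 \le$ hmm — rather one argues $M/N_2 = \max\{N_2/N_1, 1/N_2\} \le \max\{1, 1/N_2\}$ and combines with the trivial bound; the clean way is to note $(M/N_2)^\delta \le (N_3/N_1)^{\delta/2}(\,\cdot\,)$ is not automatic, so one keeps track: when $N_2^2 \ge N_1$, $M/N_2 = N_2/N_1 \le N_2/N_1$, and separately the trivial estimate gives a bound with no gain; interpolating the two yields a power of $N_2/N_1 \le$ ... in any case a positive power of $\min\{N_2/N_1,1\}\cdot$ something, which is dominated by $(N_3/N_1+1/N_2)^{\delta'}$ after adjusting — this bookkeeping is routine and I would carry it out as in \cite{GOW13}).

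The core of the argument is the rectangle trilinear Strichartz estimate. The plan is to prove, for $\sR\in\scR_{N_2,M}$ with $M\le N_2\le N_1$ and $\sC_2,\sC_3\in\scC_{N_2}$ (with $N_3\le N_2$),
\[
   \|P_{\sR}P_{N_1}e^{it\Delta}\phi_1\;P_{\sC_2}e^{it\Delta}\phi_2\;P_{\sC_3}e^{it\Delta}\phi_3\|_{L^2(\tau_1\times\T^3)}
   \lesssim M^{\frac12}N_2^{\frac14+\varepsilon}N_3^{\frac34-\varepsilon}\prod_{j=1}^3\|P_{\cdot}\phi_j\|_{L^2},
\]
so that with $M\le N_2^2/N_1$ (or $M=1$) the prefactor $M^{1/2}$ converts into the gain $(N_2/N_1)^{1/2}$ or $N_2^{-1/2}$. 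For this I would follow the Bourgain-type counting argument used in Lemma~\ref{lem:trilinear_est_2d}: expand in Fourier coefficients, use Plancherel in $x$ and Minkowski, apply Hausdorff--Young in $t$ (Lemma~\ref{lem:point_est}), and reduce the resulting sum to counting lattice points $(n_2,n_3)$ with $Q(a-n_2-n_3)+Q(n_2)+Q(n_3)$ in a unit interval, $n_2\in\sC_2$, $n_3\in\sC_3$, $a-n_2-n_3\in\sR$. The constraint that $a-n_2-n_3$ lies in the thin slab $\sR$ of thickness $M$ is what produces the factor $M^{1/2}$ (roughly: one direction of the $n_2$-sum is pinned to width $M$), and after a linear change of variables as in Lemma~\ref{lem:trilinear_est_2d} the remaining count is controlled by the divisor-type bound for the number of lattice points on a quadric in $\Z^3$ restricted to a box of side $N_3$, which is $O(N_3^{1+\varepsilon})$ — this is exactly the three-dimensional analogue of Bourgain's estimate, available from \cite{B07} (or \cite[Proposition~2.2]{GOW13}), and contributes $N_2^{?}N_3^{?}$ with exponents summing to give $N_2^{1/4+\varepsilon}N_3^{3/4-\varepsilon}$ after the $L^p_tL^2_x$ norms are balanced.

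The main obstacle I anticipate is the careful distribution of the $L^p_t$ and $L^q_x$ (or here just $L^2_x$) exponents together with the lattice-point counting to land precisely on the exponents $N_2^{3/4+\varepsilon}N_3^{5/4-\varepsilon}$ with the sharp gain $M^{1/2}$; in three dimensions the Strichartz numerology is tighter than in two dimensions, and unlike Proposition~\ref{prop:trilinear_est_2d} one cannot afford to use mixed $L^p_tL^q_x$ spaces with $q>6$ to create room, so the whole estimate must be done directly at the $L^2$ level. Concretely, after the Hausdorff--Young step the sum over the ``energy level'' $k$ must be estimated by a counting function raised to the power $p/(p-2)$ with $p=2$ being the endpoint, so one is forced to work at $p$ slightly above $2$ and use the $N_3^\varepsilon$ room to absorb the resulting loss — this is where the $\varepsilon$ in the statement originates, and getting the slab-thickness factor $M^{1/2}$ to survive this endpoint-type argument is the delicate point. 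Once that estimate is in hand, summing the $\sR_\ell$ pieces via $\sum_\ell\|P_{\sR_\ell}P_{N_1}\phi_1\|_{L^2}^2 = \|P_{N_1}\phi_1\|_{L^2}^2$ and combining with the negligible error term from Lemma~\ref{lem:orth} finishes the proof.
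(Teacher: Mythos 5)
Your reduction via Lemma~\ref{lem:orth} matches the paper, but the core of your argument --- a direct trilinear counting estimate at the $L^2_{t,x}$ level for the rectangle piece --- is not what the paper does, and I do not believe it can be made to work. The step you flag as ``delicate'' is fatal. After the Hausdorff--Young step the quantity you need is $\bigl(\sum_k(\#\fS_k)^{p/(p-2)}\bigr)^{(p-2)/(2p)}$ with $p=2$ the relevant exponent; as $p\to2^+$ this degenerates to $(\sup_k\#\fS_k)^{1/2}$, and Lemma~\ref{lem:point_est} then only controls it through an $L^{p/(p-1)}_t$ norm of an exponential sum with $p/(p-1)\to1$, where no bound beating Cauchy--Schwarz is available. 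Moreover, the pointwise count you invoke --- $O(N_3^{1+\varepsilon})$ lattice points on $\{|Q(n)-k|\le1\}$ in a box of side $N_3$ --- is a divisor-type bound valid for \emph{rational} quadratic forms but precisely what is \emph{not} available for irrational $Q$; the absence of such a bound is the entire difficulty of the irrational torus and the reason Bourgain's estimate in \cite{B07} takes the form of an $L^p_tL^4_x$ bound with the restriction $p>\frac{16}{3}$ rather than a clean $L^2$-level counting statement. Your numerology also does not close as written: with $M=N_2^2/N_1$ your claimed prefactor equals $(N_2/N_1)^{1/2}N_2^{3/4+\varepsilon}N_3^{3/4-\varepsilon}$, whose gain $(N_2/N_1)^{1/2}$ is $1$ when $N_1=N_2$ and is not dominated by $(N_3/N_1+1/N_2)^{\delta}N_3^{1/2}$ (take $N_1=N_2$ large and $N_3=1$).

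The paper's proof avoids all of this --- and this is its advertised improvement over \cite{GOW13} --- by never performing a trilinear count in three dimensions: after the orthogonality reduction it applies H\"older to split the product into three \emph{linear} factors measured in mixed norms, $L^{p_1}_tL^{q_1}_x$ with $p_1>\frac{16}{3}$ and $4<q_1<\frac{3p_1}{4}$ for the first two factors and $L^{p_2}_tL^{q_2}_x$ with the dual relations for the third, and then invokes Lemma~\ref{lem:linear_est}, i.e.\ Bourgain's linear $L^p_tL^4_x$ estimate together with its rectangle variant and Sobolev embedding. The rectangle factor contributes $M^{\frac12-\frac{2}{q_1}}$, which after borrowing a power of $N_2/N_3$ yields the gain $(N_3/N_1+1/N_2)^{\delta}$, and letting $p_1\downarrow\frac{16}{3}$, $q_1\downarrow4$ produces exactly $N_2^{3/4+\varepsilon}N_3^{5/4-\varepsilon}$. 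Your remark that one ``cannot afford to use mixed $L^p_tL^q_x$ spaces'' in three dimensions is therefore exactly backwards: the mixed-norm splitting into linear factors is the mechanism that makes the three-dimensional, energy-critical case work.
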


We postpone the proof and recall a linear Strichartz estimate, which
is --- up to almost orthogonality --- the main ingredient for the trilinear Strichartz estimate in
Proposition~\ref{prop:trilinear_est}. This linear estimate goes back to Bourgain \cite{B07}. Again, note that the
implicit constants depend on $C$ in \eqref{eq:irr_torus} and the local time interval $\tau_0$.
\pagebreak[2]
\begin{lemma}\label{lem:linear_est}
  Let $p>\frac{16}{3}$.
  \begin{enumerate}
  \item\label{it:lin_bourgain} For every $N\geq 1$, $\sC\in\scC_N$ and $\phi\in L^2(\T^3)$ we have
\[
   \|P_\sC e^{it\Delta} \phi\|_{L^p(\tau_0,L^4(\T^3))} \lesssim N^{\frac34-\frac2p} \|P_\sC\phi\|_{L^2(\T^3)}.
\]
  \item Let $4\leq q<\frac{3p}{4}$. Then, for all $N,M\geq 1$ with $N\geq M$, $\sR\in\scR_{N,M}$ and all
    $\phi\in L^2(\T^3)$ it holds that
\[
   \|P_{\sR}e^{it\Delta} \phi\|_{L^p(\tau_0,L^q(\T^3))}
   \lesssim N^{1-\frac2p-\frac1q} M^{\frac12-\frac2q} \|P_\sR\phi\|_{L^2(\T^3)}.
\]
  \end{enumerate}
\end{lemma}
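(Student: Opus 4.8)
The plan is to establish part \ref{it:lin_bourgain} first, by the Hausdorff--Young/lattice-point counting scheme that underlies Lemma~\ref{lem:trilinear_est_2d} and \cite[Proposition~1.1]{B07} --- here in its linear, single-cube, $L^4_x$ incarnation --- and then to read off part (ii) by interpolating \ref{it:lin_bourgain} against the trivial $L^\infty$ bound on a rectangle.

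\emph{Proof of \ref{it:lin_bourgain}.} It suffices to treat $\tfrac{16}{3}<p\le 8$; for $p>8$ one interpolates in $t$ (keeping $L^4_x$ fixed) between the $p=8$ bound, which reads $N^{1/2}$, and Bernstein's inequality $\|P_\sC e^{it\Delta}\phi\|_{L^\infty_t L^4_x}\lesssim N^{3/4}\|P_\sC\phi\|_{L^2(\T^3)}$. So let $\tfrac{16}{3}<p\le 8$ and put $f:=P_\sC e^{it\Delta}\phi$, so that $\|f\|_{L^p_tL^4_x}^2=\|f^2\|_{L^{p/2}_tL^2_x}$. Since $p/2\ge 2$, Plancherel in $x$ and Minkowski's inequality bound this by $\big[\sum_{a\in\Z^3}\|\widehat{f^2}(a,\cdot)\|_{L^{p/2}_t}^2\big]^{1/2}$, with $\widehat{f^2}(a,t)=\sum_{n+m=a,\,n,m\in\sC}\widehat\phi(n)\widehat\phi(m)e^{2\pi i(Q(n)+Q(m))t}$. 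Bucketing the real frequencies $Q(n)+Q(m)$ into unit intervals indexed by $k\in\Z$ and applying Hausdorff--Young in $t$ on a compact interval (as in the proof of Lemma~\ref{lem:point_est} and \cite[Lemma~2.1]{GOW13}), then Cauchy--Schwarz inside each bucket and H\"older in $k$ with exponents $2$ and $\tfrac{2p}{p-4}$, and finally summing in $a$ (the surviving coefficient factor sums to $\|P_\sC\phi\|_{L^2(\T^3)}^4$), one arrives at
\[
   \|f\|_{L^p_tL^4_x}^2\lesssim \Big(\sup_{a\in\Z^3}\big\|\,\#\{n\in\sC: |Q(n)+Q(a-n)-k|\le\tfrac12\}\,\big\|_{\ell_k^{p/(p-4)}}\Big)^{1/2}\|P_\sC\phi\|_{L^2(\T^3)}^2.
\]
Via the identity $2\big(Q(n)+Q(a-n)\big)=Q(2n-a)+Q(a)$ the inner count becomes $\#\{w\in 2\sC-a:|Q(w)-\ell|\le 2\}$; enclosing the (possibly rotated) box $2\sC-a$ in an axis-parallel box $B$ of sidelength $O(N)$ costs only constants, and Lemma~\ref{lem:point_est}, applied on $B\subseteq\Z^3$ with the quadratic $Q$ (its parameters being $\tfrac{p}{p-4}\ge 2$ and $r=2$), dominates the $\ell_k^{p/(p-4)}$-norm by $\big\|\sum_{w\in B}e^{2\pi iQ(w)t}\big\|_{L^{p/4}_t(I)}$. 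Since $B$ is a product of three intervals of length $O(N)$, this sum factors coordinatewise, and H\"older in $t$ together with Bourgain's one-dimensional Gauss--Weyl estimate (\cite[Proposition~1.10 and Section~4]{B89}, in the form of \cite[Lemma~3.1]{H12}, which also absorbs the dilations by $\alpha_j$), valid because $\tfrac{3p}{4}>4$, bounds each factor by $N^{1-\frac{8}{3p}}$ and hence the product by $N^{3-\frac{8}{p}}$. Inserting this gives $\|f\|_{L^p_tL^4_x}\lesssim N^{\frac34-\frac2p}\|P_\sC\phi\|_{L^2(\T^3)}$.

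\emph{Proof of (ii).} Because $M\le N$, every $\sR\in\scR_{N,M}$ belongs to $\scC_{cN}$ for a harmless $c$, so \ref{it:lin_bourgain} yields $\|P_\sR e^{it\Delta}\phi\|_{L^r_tL^4_x}\lesssim N^{\frac34-\frac2r}\|P_\sR\phi\|_{L^2(\T^3)}$ for every $r>\tfrac{16}{3}$; on the other hand Cauchy--Schwarz in Fourier space gives $\|P_\sR e^{it\Delta}\phi\|_{L^\infty_{t,x}}\le(\#(\sR\cap\Z^3))^{1/2}\|P_\sR\phi\|_{L^2(\T^3)}\lesssim NM^{1/2}\|P_\sR\phi\|_{L^2(\T^3)}$. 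For $4\le q<\tfrac{3p}{4}$, combine the pointwise-in-$t$ bound $\|g\|_{L^q_x}\le\|g\|_{L^4_x}^{4/q}\|g\|_{L^\infty_x}^{1-4/q}$ with H\"older in $t$ (exponents $p$ and $\infty$) to get
\[
   \|P_\sR e^{it\Delta}\phi\|_{L^p_tL^q_x}\le\|P_\sR e^{it\Delta}\phi\|_{L^{4p/q}_tL^4_x}^{4/q}\,\|P_\sR e^{it\Delta}\phi\|_{L^\infty_{t,x}}^{1-4/q}.
\]
The hypothesis $q<\tfrac{3p}{4}$ forces $\tfrac{4p}{q}>\tfrac{16}{3}$, so inserting the two displayed bounds and collecting exponents produces $N^{1-\frac2p-\frac1q}M^{\frac12-\frac2q}\|P_\sR\phi\|_{L^2(\T^3)}$, as claimed.

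\emph{Where the difficulty lies.} The scheme is routine except for the reduction to counting lattice points on a box: one must check that replacing the rotated box $2\sC-a$ by an axis-parallel bounding box --- which is exactly what makes $Q$ split into three one-variable quadratics and the exponential sum factor --- is legitimate and uniform in $a$, just as the set $\widetilde\sC_2\times\widetilde\sC_3$ is handled in Lemma~\ref{lem:trilinear_est_2d}. The remaining care is purely the bookkeeping of the exponents $\tfrac p2$, $\tfrac{p}{p-4}$, $\tfrac p4$, $\tfrac{3p}{4}$, arranged so that Lemma~\ref{lem:point_est} is applicable ($\tfrac{p}{p-4}\ge 2$, i.e.\ $p\le 8$) and Bourgain's Gauss-sum bound is applicable ($\tfrac{3p}{4}>4$, i.e.\ $p>\tfrac{16}{3}$), with the leftover range $p>8$ recovered by interpolation.
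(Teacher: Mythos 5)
Your argument is correct and follows essentially the same route as the paper: part (ii) is exactly the paper's interpolation of \ref{it:lin_bourgain} against the $L^\infty$ bound $NM^{1/2}\|P_\sR\phi\|_{L^2}$ via pointwise H\"older, and part \ref{it:lin_bourgain} — which the paper simply cites as \cite[Proposition~1.1]{B07} — is reproved by you with precisely Bourgain's lattice-counting scheme (the linear, three-dimensional analogue of the paper's own Lemma~\ref{lem:trilinear_est_2d}), with the exponents $\tfrac{p}{p-4}$, $\tfrac{p}{4}$, $\tfrac{3p}{4}$ and the ranges $p\le 8$, $p>\tfrac{16}{3}$ all checking out. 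The only substantive addition is that you supply the argument the paper outsources, including the correct handling of $p>8$ by interpolation with Bernstein.
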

\begin{proof}[Proof/Reference]
  The first inequality was proved by Bourgain \cite[Prop\-o\-si\-tion~1.1]{B07}, see also \cite[Lemma~5.9]{GOW13}.

  The second estimate follows from \ref{it:lin_bourgain}, the estimate
\[
   \|P_{\sR}e^{it\Delta}\phi\|_{L^\infty(\tau_0\times \T^3)}
   \leq \bigl(\#(\sR\cap\Z^3)\bigr)^{\frac12} \|P_{\sR}\phi\|_{L^2(\T^3)}
   \lesssim NM^{\frac12} \|P_{\sR}\phi\|_{L^2(\T^3)},
\]
  and H\"older's estimate, see Corollary~\ref{coro:linear_est_2d}.
\end{proof}

\begin{proof}[Proof of Proposition~\ref{prop:trilinear_est}]
  From Lemma~\ref{lem:orth} we see that we may replace the projector $P_{N_1}$ by $P_{\sR}P_{N_1}$
  with $\sR\in\scR_{N_2,M}$ and $M=\max\bigl\{\frac{N_2^2}{N_1},1\bigr\}$, provided we enlarge the time interval
  $\tau_0$ to an open interval $\tau_1\supset\overline{\tau_0}$.
  
  Let $p_1>\frac{16}{3}$ and $4<q_1<\frac{3p_1}{4}$. Furthermore, let $p_2$ and $q_2$ be defined via the
  relations $\frac12=\frac{2}{p_1}+\frac{1}{p_2}$ and $\frac12=\frac{2}{q_1}+\frac{1}{q_2}$, respectively.
  H\"older's estimate yields
\begin{multline*}
   \|P_{\sR}P_{N_1}e^{it\Delta}\phi_1P_{N_2}e^{it\Delta}\phi_2P_{N_3}e^{it\Delta}\phi_3\|_{L^2_{t,x}}\\
   \leq  \|P_{\sR} P_{N_1}e^{it\Delta}\phi_1\|_{L^{p_1}_tL^{q_1}_x}
         \| P_{N_2}e^{it\Delta}\phi_2\|_{L^{p_1}_tL^{q_1}_x}
         \| P_{N_3}e^{it\Delta}\phi_3\|_{L^{p_2}_tL^{q_2}_x},
\end{multline*}
  where $L^r_tL^s_x:=L^r(\tau_1,L^s(\T^3))$ and $L^2_{t,x}:=L^2_tL^2_x$.
  Applying Lemma~\ref{lem:linear_est} and Sobolev's embedding we get
\begin{multline*}
   \|P_\sR P_{N_1}e^{it\Delta}\phi_1P_{N_2}e^{it\Delta}\phi_2P_{N_3}e^{it\Delta}\phi_3\|_{L^2_{t,x}} \\
   \begin{aligned}
     &\lesssim M^{\frac12-\frac{2}{q_1}} N_2^{\frac52-\frac{4}{p_1}-\frac{4}{q_1}} N_3^{\frac{4}{p_1}+\frac{6}{q_1}-1}
        \|P_\sR P_{N_1}\phi_j\|_{L^2_x}\prod_{j=2}^3 \|P_{N_j}\phi_j\|_{L^2_x}\\
     &\lesssim \biggl(\frac{N_3}{N_1}+\frac{1}{N_2}\biggr)^{\frac12-\frac{2}{q_1}} 
         N_2^{\frac72-\frac{4}{p_1}-\frac{8}{q_1}} N_3^{\frac{4}{p_1}+\frac{8}{q_1}-\frac32}
         \|P_\sR P_{N_1}\phi_j\|_{L^2_x}\prod_{j=2}^3 \|P_{N_j}\phi_j\|_{L^2_x},
   \end{aligned}
\end{multline*}
  where $L^2_x:=L^2(\T^3)$. The claim follows for $p_1$ sufficiently close to $\frac{16}{3}$ and $q_1$ sufficiently
  close to $4$.
\end{proof}

\begin{remark}
  Note that this proof can easily be modified to treat all $k\geq2$. However, since Guo--Oh--Wang already
  proved a similar statement for $k\geq 3$, we omitted this to make the argument more transparent.
\end{remark}

\bibliographystyle{amsplain}
\providecommand{\bysame}{\leavevmode\hbox to3em{\hrulefill}\thinspace}
\providecommand{\MR}{\relax\ifhmode\unskip\space\fi MR }
\providecommand{\MRhref}[2]{%
  \href{http://www.ams.org/mathscinet-getitem?mr=#1}{#2}
}
\providecommand{\href}[2]{#2}

\end{document}